\definecolor{MyLinkColor}{rgb}{0,0,0.4}
\newtheorem{theorem}{Theorem}[section]
\newtheorem{lemma}[theorem]{Lemma}
\newtheorem{proposition}[theorem]{Proposition}
\newtheorem{remark}[theorem]{Remark}
\numberwithin{equation}{section}
\begin{document}
\title[]{Bounded weak solutions to a class of\\ degenerate cross-diffusion systems}
\thanks{}
\author{Philippe Lauren\c{c}ot}
\address{Institut de Math\'ematiques de Toulouse, UMR~5219, Universit\'e de Toulouse, CNRS \\ F--31062 Toulouse Cedex 9, France}
\email{laurenco@math.univ-toulouse.fr}
\author{Bogdan-Vasile Matioc}
\address{Fakult\"at f\"ur Mathematik, Universit\"at Regensburg \\ D--93040 Regensburg, Deutschland}
\email{bogdan.matioc@ur.de}

\keywords{degenerate parabolic system - cross-diffusion - boundedness - Liapunov functionals - global existence}
\subjclass{35K65 - 35K51 - 37L45 - 35B65 }

\date{\today}

\begin{abstract}
Bounded weak solutions are constructed for a degenerate parabolic system with a full diffusion matrix, which is a generalized version of the thin film Muskat system. Boundedness is achieved with the help of a sequence $(\mathcal{E}_n)_{n\ge 2}$ of Liapunov functionals such that $\mathcal{E}_n$ is equivalent to the $L_n$-norm for each $n\ge 2$ and $\mathcal{E}_n^{1/n}$ controls the $L_\infty$-norm in the limit $n\to\infty$. Weak solutions are built by a compactness approach, special care being needed in the construction of the approximation in order to preserve the availability of the above-mentioned Liapunov functionals.
\end{abstract}

\maketitle

%
%
\pagestyle{myheadings}
\markboth{\sc{Ph.~Lauren\c cot \& B.-V.~Matioc}}{\sc{Bounded weak solutions to cross-diffusion systems}}

\section{Introduction}\label{sec1}

Let $\Omega$ be a bounded domain of $\mathbb{R}^N$, $N\ge 1$, with smooth boundary $\partial\Omega$ and let $R$ and $\mu$ be two positive real numbers. In a recent paper~\cite{LM2021b}, we noticed that there is an infinite family $(\mathcal{E}_n)_{n\ge 1}$ of Liapunov functionals associated with the thin film Muskat system
\begin{align*}
	\partial_t f & = \mathrm{div}\left(f \nabla\left[ (1+R)f + R g \right] \right) \;\text{ in }\; (0,\infty)\times \Omega\,, \\
	\partial_t g & = \mu R\, \mathrm{div}\left(g \nabla\left[ f + g \right] \right) \;\text{ in }\; (0,\infty)\times \Omega\,, 
\end{align*}
supplemented with homogeneous Neumann boundary conditions and initial conditions, with the following properties: for all $n\ge 2$, there are $0<c_n < C_n$ such that 
\begin{equation*}
c_n \|f+g\|_n^n \le \mathcal{E}_n(f,g) \le C_n \|f+g\|_n ^n\,, \qquad (f,g)\in L_{n,+}(\Omega,\mathbb{R}^2)\,,
\end{equation*}
and there are $0<c_\infty < C_\infty$ such that
\begin{equation*}
	c_\infty \|f+g\|_\infty \le \liminf_{n\to\infty} \mathcal{E}_n(f,g)^{1/n} \le \limsup_{n\to\infty} \mathcal{E}_n(f,g)^{1/n} \le C_\infty \|f+g\|_\infty
\end{equation*}
for $(f,g)\in L_{\infty,+}(\Omega,\mathbb{R}^2)$, 
where $L_{p,+}(\Omega,\mathbb{R}^m)$ denotes the positive cone of $L_p(\Omega,\mathbb{R}^m)$ for $m\ge 1$ and~${p\in [1,\infty]}$. 
On the one hand, the thin film Muskat system being of cross-diffusion type (i.e., featuring a diffusion matrix with no zero entry), 
the availability of such a family of Liapunov functionals is rather seldom within this class of systems and paves the way towards the construction of bounded weak solutions, a result that we were only able to show in one space dimension $N=1$ in \cite{LM2021b}.
 On the other hand, it is tempting to figure out whether this property is peculiar to the thin film Muskat system or extends to the generalization thereof
\begin{subequations}\label{PB}
	\begin{align}
		\partial_t f & = \mathrm{div}\left(f \nabla\left[ af + b g \right] \right) \;\text{ in }\; (0,\infty)\times \Omega\,, \label{PB1a} \\
		\partial_t g & = \mathrm{div}\left(g \nabla\left[ cf + d g \right] \right) \;\text{ in }\; (0,\infty)\times \Omega\,, \label{PB1b}
	\end{align}
with $(a,\, b,\, c,\, d) \in (0,\infty)^4$, supplemented with homogeneous Neumann boundary conditions
\begin{equation}
	\nabla f\cdot \mathbf{n} = \nabla g\cdot \mathbf{n} = 0 \;\text{ on }\; (0,\infty)\times \partial\Omega\,, \label{PB1c}
\end{equation}
and non-negative initial conditions
\begin{equation}
	(f,g)(0) = (f^{in},g^{in}) \;\text{ in }\; \Omega\,. \label{PB1d}
\end{equation}
\end{subequations}
Obviously, the thin film Muskat system is a particular case of~\eqref{PB1a}-\eqref{PB1b}, corresponding to the choice~${(a,\, b,\, c,\, d)=(1+R, \, R,\, \mu R,\, \mu R)}$. 

 The main result of this paper is to show that, for any quadruple $(a,\, b,\, c,\, d)$ satisfying
 \begin{equation}\label{condabcd}
 	(a,\, b,\, c,\, d) \in(0,\infty)^4 \qquad\text{and}\qquad ad>bc\,,
 \end{equation}
we can associate a similar family of Liapunov functionals with~\eqref{PB} and prove the global existence of bounded non-negative weak solutions to~\eqref{PB}, whatever the dimension $N\ge 1$.
 More precisely, given a quadruple $(a,\, b,\, c,\, d)$ satisfying~\eqref{condabcd},
  we define a sequence $(\Phi_n)_{n\ge 1}$ of functions as follows. Setting $L(r):=r\ln{r}-r+1\ge 0$, $r\ge 0$, we first define the function $\Phi_1$ by the relation  
\begin{equation}
\Phi_1(X) := L(X_1) + \cfrac{b^2}{ad}L(X_2)\,, \qquad X=(X_1,X_2)\in [0,\infty)^2\,. \label{p4d}
\end{equation}
Next, for each integer $n\geq 2$, let $\Phi_n$ be the homogeneous polynomial of degree $n$ defined by
\begin{equation}
	\Phi_n(X) := \sum_{j=0}^n a_{j,n} X_1^j X_2^{n-j}\,, \qquad X=(X_1,X_2)\in \mathbb{R}^2\,, \label{p4b}
\end{equation}
with $a_{0,n}:=1$ and 
\begin{equation} \label{p4c}
	a_{j,n}:=\binom{n}{j} \prod_{k=0}^{j-1}\frac{ak+c(n-k-1)}{bk+d(n-k-1)}> 0\,,\qquad 1\leq j\leq n\,.
\end{equation}
We then define, for $n\ge 1$, the functional 
\begin{equation}\label{p4}
	\mathcal{E}_n(u):=\int_{\Omega}\Phi_n(u(x))\, dx,\qquad u=(f,g)\in L_{\max\{2,n\},+}(\Omega,\mathbb{R}^2)\,. 
\end{equation}
We finally  observe that \eqref{condabcd} guarantees that
\begin{equation}\label{constants}
	\Theta_1:=\frac{b(ad+bc)}{2ad}>0\qquad\text{and}\qquad \Theta_2:=\frac{(ad-bc)(3ad+bc)}{4a^2d^2}>0\,.
\end{equation}
With this notation, the main result of this paper is the following:

\begin{theorem}\label{ThBWS}
	Assume \eqref{condabcd} and let $ u^{in} := (f^{in},g^{in})\in L_{\infty,+}(\Omega,\mathbb{R}^2)$ be given.
	Then, there is a bounded weak solution $u=(f,g)$ to \eqref{PB} such that:
	\begin{itemize}
		\item[(i)] for each $T>0$, 
		\begin{equation}
			(f,g)\in L_{\infty,+}((0,T)\times \Omega,\mathbb{R}^2) \cap L_2((0,T),H^1(\Omega,\mathbb{R}^2))\cap W_2^1((0,T),H^1 (\Omega,\mathbb{R}^2)')\,; \label{p1}
		\end{equation}
		\item[(ii)] for all $\varphi\in H^1(\Omega)$ and $t\ge 0$,
		\begin{subequations}\label{p2}
			\begin{equation}
				\int_\Omega (f(t,x)-f^{in}(x)) \varphi(x)\ \mathrm{d}x  +\int_0^t \int_\Omega f(s,x) \nabla[ a f + b g ](s,x) \cdot \nabla\varphi(x)\, \mathrm{d}x\mathrm{d}s =0 \label{p2a}
			\end{equation} 
			and
			\begin{equation}
				\int_\Omega (g(t,x)-g^{in}(x)) \varphi(x)\ \mathrm{d}x +  \int_0^t \int_\Omega g(s,x) \nabla[cf + dg ](s,x) \cdot \nabla\varphi(x)\, \mathrm{d}x\mathrm{d}s =0\,; \label{p2b}
			\end{equation}
		\end{subequations} 
		\item[(iii)] for all $t\ge 0$, 
		\begin{equation}\label{p3}
			\begin{aligned}
				&\mathcal{E}_1(u(t))+  \frac{1}{a}\int_0^t \int_\Omega\big[|\nabla (af+\Theta_1 g)|^2+\Theta_2 |\nabla g|^2\big](s,x)\ \mathrm{d}x\mathrm{d}s\le \mathcal{E}_1(u^{in})\,,
			\end{aligned}
		\end{equation}
		where the positive constants $\Theta_1$ and $\Theta_2$ are defined in \eqref{constants};
		\item[(iv)] for all $n\ge 2$ and all $t\ge 0$, 
		\begin{equation}
			\mathcal{E}_n(u(t)) \le \mathcal{E}_n(u^{in})\,; \label{p4a}
		\end{equation}
		\item[(v)] for $t\ge 0$,
		\begin{equation}
			\|f(t)+g(t)\|_\infty\le \frac{d}{b}\frac{\max\{a,\,b\}}{\min\{c,\, d\}} \|f^{in}+g^{in}\|_\infty\,.  \label{p5}
		\end{equation} 
	\end{itemize}
\end{theorem}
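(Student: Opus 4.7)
The plan is to realize $u=(f,g)$ as the limit of solutions to a regularized system for which the whole family $(\mathcal{E}_n)_{n\ge 1}$ is still dissipated. Concretely, I would fix a parameter $\varepsilon\in(0,1)$, approximate the initial data by smooth, strictly positive functions $(f^{in}_\varepsilon, g^{in}_\varepsilon)$ that converge in $L_p(\Omega)$ for every finite $p$, and consider a non-degenerate regularization of~\eqref{PB}; classical quasilinear parabolic theory then produces a unique smooth, strictly positive solution $u_\varepsilon=(f_\varepsilon,g_\varepsilon)$ on every $[0,T]$. The crux, already emphasized in the abstract, is that the regularization must be chosen so that the formal entropy computations that give the monotonicity of $\mathcal{E}_n$ still close rigorously. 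Since the coefficients $(a_{j,n})$ in~\eqref{p4c} are tailored to the precise cross-diffusion structure of~\eqref{PB}, any ad hoc cutoff of $f$ or $g$ inside the mobilities would spoil the algebraic identities that produce the non-negative dissipation, and designing such an approximation is to my mind the main difficulty of the argument.

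Granted such a scheme, I would next establish the a priori estimates (iii)-(v) at the approximate level. The $\mathcal{E}_1$-inequality follows by testing the two equations for $u_\varepsilon$ against $\partial_{X_1}\Phi_1(u_\varepsilon)=\ln f_\varepsilon$ and $\partial_{X_2}\Phi_1(u_\varepsilon)=(b^2/ad)\ln g_\varepsilon$, integrating by parts in space, and regrouping the resulting quadratic form in $(\nabla f_\varepsilon,\nabla g_\varepsilon)$ into $a^{-1}\bigl[|\nabla(af_\varepsilon+\Theta_1 g_\varepsilon)|^2+\Theta_2|\nabla g_\varepsilon|^2\bigr]$, the positivity of the coefficients being precisely what~\eqref{condabcd}--\eqref{constants} encode. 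For $n\ge 2$, testing against $\partial_{X_1}\Phi_n(u_\varepsilon)$ and $\partial_{X_2}\Phi_n(u_\varepsilon)$ produces, by the very choice of the recursion~\eqref{p4c}, a pointwise non-negative quadratic form in $(\nabla f_\varepsilon,\nabla g_\varepsilon)$, hence $\mathcal{E}_n(u_\varepsilon(t))\le\mathcal{E}_n(u^{in}_\varepsilon)$. The equivalence $c_n\|f+g\|_n^n\le\mathcal{E}_n(u)\le C_n\|f+g\|_n^n$, together with the asymptotics $(C_n/c_n)^{1/n}\to d\max\{a,b\}/(b\min\{c,d\})$, then turns this bound into an $L_n$-estimate that, upon letting $n\to\infty$, delivers~\eqref{p5} uniformly in $\varepsilon$.

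With the uniform $L_\infty$-bound and the dissipation estimate~\eqref{p3} in hand, $(u_\varepsilon)$ is bounded in $L_\infty((0,T)\times\Omega,\RR^2)\cap L_2((0,T),H^1(\Omega,\RR^2))$; the equations then force $(\partial_t u_\varepsilon)$ to be bounded in $L_2((0,T),H^1(\Omega,\RR^2)')$, and the Aubin--Lions lemma delivers a subsequence converging to some $u$ strongly in $L_2((0,T)\times\Omega,\RR^2)$, weakly in $L_2(H^1)$, weak-$*$ in $L_\infty$, and with $\partial_t u_\varepsilon\rightharpoonup \partial_t u$ in $L_2(H^1(\Omega,\RR^2)')$. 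The strong $L_2$ convergence of $u_\varepsilon$ combined with the weak $L_2$ convergence of $\nabla u_\varepsilon$ and the uniform $L_\infty$-bound allows me to pass to the limit in the nonlinear flux terms $f_\varepsilon\nabla(af_\varepsilon+bg_\varepsilon)$ and $g_\varepsilon\nabla(cf_\varepsilon+dg_\varepsilon)$, so that~\eqref{p2} holds for $u$. The bounds \eqref{p3}, \eqref{p4a}, and~\eqref{p5} finally transfer to the limit by standard lower semicontinuity of norms and of convex integral functionals, completing the construction.
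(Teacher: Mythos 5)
There is a genuine gap, and it is exactly the one you yourself flag: the whole construction hinges on an approximation scheme that still dissipates every $\mathcal{E}_n$, and your proposal never produces one. You write ``granted such a scheme'' and then carry out the (by now standard) steps — testing with $D\Phi_n$, converting $\mathcal{E}_n$-bounds into $L_n$-bounds and letting $n\to\infty$, Aubin--Lions compactness, lower semicontinuity — but the existence part of the theorem is precisely the design of that scheme, which occupies all of Section~\ref{sec2} of the paper. Moreover, the auxiliary claim you lean on instead, namely that ``classical quasilinear parabolic theory produces a unique smooth, strictly positive global solution'' of some unspecified non-degenerate regularization, is itself not available off the shelf: global existence for such a regularized cross-diffusion system requires a priori bounds that you would have to extract from the very entropy structure you have not yet secured, and nonnegativity/strict positivity of both components is not automatic once the mobilities are modified.

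The paper resolves this by abandoning the time-continuous regularization in favour of an implicit Euler scheme (\Cref{P:1}) in which each step is a nonlinear elliptic system; convexity of $\Phi_n$ then yields the discrete dissipation from the elementary inequality $\langle u-U, D\Phi_n(u)\rangle \ge \Phi_n(u)-\Phi_n(U)$, see \eqref{pro2}, so no chain rule in time is needed. The elliptic step is in turn solved with a \emph{two-parameter} mobility $M_\varepsilon^\rho=\varepsilon I_2+\lambda_\varepsilon M^\rho$: the truncation $\alpha_\rho$ does break the algebraic identities (as you anticipated), but the damping factor $\lambda_\varepsilon(X)=2/(1+e^{\varepsilon(X_1+X_2)})$ is inserted precisely so that the resulting defect in the entropy inequality is of size $\tau C(n)\rho^{n-1}e^{-\varepsilon\rho}\|\nabla u_\varepsilon^\rho\|_2^2$, see \eqref{ex12c}, which vanishes as $\rho\to\infty$ at fixed $\varepsilon$ thanks to the $\varepsilon$-uniform gradient bound \eqref{ex12e} supplied by the $\varepsilon I_2$ term after symmetrization with $S=\tfrac{bd}{2}D^2\Phi_2$; nonnegativity and the $L_\infty$-bound at level $\rho$ come from the structural properties \eqref{ap10}--\eqref{ap10'} of the truncated matrix, not from parabolic maximum principles. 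Without an argument of this kind (or some substitute construction with the same property), your proof establishes the a priori estimates only formally and does not yield the existence statement of \Cref{ThBWS}; the remaining limit passages $\rho\to\infty$, $\varepsilon\to0$, $\tau\to0$ in your outline are otherwise in the same spirit as the paper's (which uses the discrete Aubin--Lions theorem for the last one).
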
 

Let us first mention that \Cref{ThBWS} improves \cite{LM2021b} in two directions: on the one hand, it shows that the structural properties~\eqref{p3}, \eqref{p4a}, and~\eqref{p5}, uncovered there for the thin film Muskat system, are also available for the whole class~\eqref{PB}. On the other hand, it provides the existence of non-negative bounded weak solutions to~\eqref{PB} in all space dimensions, a result which was only established in one space dimension in \cite{LM2021b}. Global weak solutions to the thin film Muskat system are also constructed in \cite{ACCL2019, AIJM2018, BGB2019, ELM2011, LM2013, LM2017}, but they need not be bounded, except in \cite{BGB2019}. The latter however requires some smallness condition on the initial data, in contrast to \Cref{ThBWS}. Finally, the local well-posedness of the thin film Muskat system in the classical sense is investigated in~\cite{EMM2012}.

\bigskip

We next outline the main steps of the proof of \Cref{ThBWS}. As in \cite{LM2021b}, the starting point is to notice that, introducing the mobility matrix
\begin{equation}
	M(X)  =  (m_{jk}(X))_{1\le j,k\le 2} := \begin{pmatrix}
		a X_1& b X_1 \\ 
		c X_2 & d X_2
	\end{pmatrix}\,, \qquad X=(X_1,X_2)\in \mathbb{R}^2\,, \label{x1}
\end{equation}
and $u:=(f,g)$, an alternative formulation of the system~\eqref{PB1a}-\eqref{PB1b} is
\begin{equation}
	\partial_t u = \sum_{i=1}^N\partial_i (M(u)\partial_i u) \;\text{ in }\; (0,\infty)\times \Omega\,. \label{x2}
\end{equation} 
Then, given $\Phi\in C^2(\mathbb{R}^2,\mathbb{R})$, it readily follows from \eqref{x2}, the homogeneous Neumann boundary conditions~\eqref{PB1c}, and the symmetry of the Hessian matrix $D^2(\Phi)$ that
\begin{equation}
	\frac{\mathrm{d}}{\mathrm{d}t} \int_\Omega \Phi(u)\ \mathrm{d}x + \sum_{i=1}^N\int_\Omega \langle D^2\Phi(u) M(u) \partial_i u , \partial_i u \rangle\ \mathrm{d}x = 0\,, \label{x3}
\end{equation}
where $\langle \cdot,\cdot \rangle$ stands for the scalar product on $\mathbb{R}^2$. 
As a straightforward  consequence of \eqref{x3} we note that $\int_\Omega \Phi(u)\ \mathrm{d}x$ is a Liapunov functional for~\eqref{x2} when the matrix $D^2\Phi(u) M(u)$ is positive semidefinite. We shall then show in \Cref{secA} that, for all $n\ge 2$, it is possible to construct an homogeneous polynomial $\Phi_n\in \mathbb{R}[X_1,X_2]$ of degree $n$ which is convex on $[0,\infty)^2$ and such that the matrix $D^2\Phi_n(X) M(X)$ is positive semidefinite for all $X\in [0,\infty)^2$. A closed form formula is actually available for the polynomial $\Phi_n$, see \eqref{p4b} and \eqref{p4c}.  

We next construct weak solutions to~\eqref{x2} by a compactness method. It is here of utmost importance to construct approximations 
which do not alter the inequalities~\eqref{x3} for $\Phi=\Phi_n$ and~${n\ge 1}$. 
As a first step, it is well-known that implicit time discrete schemes are well-suited in that direction. 
Thus, given $\tau>0$, we shall first prove the existence of  a sequence $(u^\tau_l)_{l \ge 0}$
 which satisfies~${u^\tau_0=u^{in}:=(f^{in},g^{in})}$ and, for $l\ge 0$,
\begin{equation}
	u^\tau_{l+1} - \tau \sum_{i=1}^N\partial_i \Big(M(u^\tau_{l+1})\partial_i u^\tau_{l+1}\Big) = u^\tau_l \;\;\text{ in }\;\; \Omega\,, \label{x4}
\end{equation}
supplemented with homogeneous Neumann boundary conditions. Furthermore, the sequence   $(u^\tau_l)_{l \ge 0}$ has the property that, for $n\ge 1$ and $l\ge 0$,
\begin{equation}
	\mathcal{E}_n(u_{l+1}^\tau) + \tau \sum_{i=1}^N\int_\Omega \langle D^2\Phi_n(u_{l+1}^\tau) M(u_{l+1}^\tau) \partial_i u_{l+1}^\tau , \partial_i u_{l+1}^\tau \rangle\ \mathrm{d}x \le \mathcal{E}_n(u_{l}^\tau)\,, \label{x5}
\end{equation}
so that the structural property~\eqref{x3} is indeed preserved by the time discrete scheme. The existence of a solution to~\eqref{x4} is achieved by a compactness method relying on an approximation of the matrix $M(\cdot)$ by bounded ones. This step is actually the more delicate one, as we have to construct matrices approximating $M(\cdot)$ which do not alter~\eqref{x5}. To this end, a two-parameter approximation procedure is required and it is detailed in \Cref{sec2.2}. The existence of a weak solution to~\eqref{x4} satisfying~\eqref{x5} is shown in \Cref{sec2.4}, building upon preliminary and intermediate results established in \Cref{sec2.1} and \Cref{sec2.3}.

\begin{remark}\label{rem.W}
	A common feature of system~\eqref{PB} is that it has, at least formally, a gradient flow structure for the functional $\mathcal{E}_2$ with respect to the $2$-Wasserstein distance in the space $\mathcal{P}_2(\Omega,\mathbb{R}^2)$ of probability measures with finite second moments, as pointed out in \cite{ACCL2019, LM2013} for the thin film Muskat system. 
	In particular, there is a natural variational structure associated with~\eqref{PB} which is suitable to construct weak solutions.
	 However, the connection between this variational structure and the whole family $(\mathcal{E}_n)_{n\ge 2}$ of Liapunov functionals is yet unclear.
\end{remark}

\bigskip

\paragraph{\textbf{Notation.}} For $p\in [1,\infty]$, we denote the $L_p$-norm in $L_p(\Omega)$ by $\|\cdot\|_p$ and set 
\begin{equation*}
	L_p(\Omega,\mathbb{R}^2) := L_p(\Omega)\times L_p(\Omega)\,, \quad H^1(\Omega,\mathbb{R}^2):= H^1(\Omega)\times H^1(\Omega)\,.
\end{equation*}
The positive cone of a Banach lattice $E$ is denoted by $E_+$. The space of $2\times 2$ real-valued matrices is denoted by ${\mathbf{M}_2(\mathbb{R})}$, while ${\mathbf{Sym}_2(\mathbb{R})}$ is the subset of ${\mathbf{M}_2(\mathbb{R})}$ consisting of symmetric matrices and~${\mathbf{SPD}_2(\mathbb{R})}$ is the set of symmetric and positive definite matrices in $\mathbf{M}_2(\mathbb{R})$.
Finally, we denote the positive part of a real number $r\in\mathbb{R}$ by~${r_+:=\max\{r,0\}}$ and $\langle \cdot,\cdot \rangle$ is the scalar product on $\mathbb{R}^2$.

\section{A time discrete scheme}\label{sec2}

In order to construct bounded non-negative global weak solutions to the evolution problem~\eqref{PB}, we employ a compactness approach, paying special attention to preserve as much as possible the structural properties~\eqref{p3}, \eqref{p4a}, and~\eqref{p5} in the design of the approximation. It turns out that implicit time discrete schemes are well-suited for that purpose and we thus establish in this section the existence of solutions to the implicit time discrete scheme associated with~\eqref{PB}, see~\eqref{ex1a}-\eqref{ex1b}. 

\begin{proposition}\label{P:1}
	Given  $\tau>0$ and $U=(F,G)\in L_{\infty,+}(\Omega,\mathbb{R}^2)$, there is a solution  
\begin{equation*}
		u=(f,g) \in H^1(\Omega,\mathbb{R}^2)\cap L_{\infty,+}(\Omega,\mathbb{R}^2)
\end{equation*} 
to 
\begin{subequations}\label{ex1}
\begin{align}
\int_\Omega \big( f \varphi + \tau f \nabla\left[ a f + b g \right] \cdot\nabla\varphi \big)\ \mathrm{d}x & = \int_\Omega F \varphi\ \mathrm{d}x\,, \qquad \varphi\in H^1(\Omega)\,, \label{ex1a} \\		
\int_\Omega \big( g \psi + \tau   g \nabla\left[ c f +d g \right]\cdot \nabla\psi \big)\ \mathrm{d}x & = \int_\Omega G \psi\ \mathrm{d}x\,, \qquad \psi\in H^1(\Omega)\,, \label{ex1b}
\end{align}
\end{subequations}
which also satisfies
\begin{equation}
	\mathcal{E}_n(u)  \le 	\mathcal{E}_n(U)\qquad\text{for $n\ge 2$}\label{ex2}
\end{equation}
and
\begin{equation}
	\mathcal{E}_1(u) + \frac{\tau}{a} \int_\Omega \big[  |\nabla (af+\Theta_1 g)|^2 +  \Theta_2|\nabla g|^2 \big]\ \mathrm{d}x \le 	\mathcal{E}_1(U) \,, \label{ex2b}
\end{equation}
recalling that, see~\eqref{constants},
\begin{equation*}
	\Theta_1 = \frac{b(ad+bc)}{2ad}>0 \;\;\text{ and }\;\; \Theta_2 = \frac{(ad-bc)(3ad+bc)}{4a^2 d^2}>0\,.
\end{equation*}	
\end{proposition}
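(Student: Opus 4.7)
The plan is to approximate the degenerate quasilinear system~\eqref{ex1} by a family of regularized problems that are bounded and coercive enough to be amenable to standard fixed-point methods, to solve them, and to pass to the limit with the Liapunov functionals $\mathcal{E}_n$ providing the key uniform estimates.

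The first and most delicate step is to introduce a two-parameter regularization $M_{\varepsilon,\delta}$ of the mobility matrix $M$ defined in~\eqref{x1}. One parameter should render $M_{\varepsilon,\delta}$ bounded on $[0,\infty)^2$, while the other should restore the ellipticity lost through the degeneracy of $M$ on the axes. The crucial design constraint, which distinguishes this from a naive truncation, is that $D^2\Phi_n(X)M_{\varepsilon,\delta}(X)$ must remain positive semidefinite on $\mathbb{R}^2$ for every $n\ge 1$, so that the Liapunov inequalities announced in~\eqref{x5} are not destroyed by the approximation. Carrying out this construction is the main obstacle of the proof, since the positivity structure is a rather rigid feature of $M$ and is easily spoiled by cutoffs.

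For fixed $(\varepsilon,\delta)$, I would then solve the nonlinear regularized problem by a Schauder or Leray--Schauder fixed-point argument: freezing the coefficients at some $v\in L_\infty(\Omega,\mathbb{R}^2)_+$ in a suitable ball yields a linear elliptic system whose unique weak solution $u=\mathcal{T}(v)\in H^1(\Omega,\mathbb{R}^2)$ is produced by the Lax--Milgram theorem; compactness of $\mathcal{T}$ follows from elliptic regularity combined with Rellich's theorem, and an invariant ball is provided by the a priori $L_\infty$-bound derived below. Non-negativity of the fixed point $u_{\varepsilon,\delta}$ is enforced either by a maximum principle argument or by extending $M_{\varepsilon,\delta}$ suitably off the positive cone.

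Testing the regularized equation with $\nabla\Phi_n(u_{\varepsilon,\delta})$, invoking the convexity of $\Phi_n$ on $[0,\infty)^2$, and using the positive semidefiniteness of $D^2\Phi_n(u_{\varepsilon,\delta})M_{\varepsilon,\delta}(u_{\varepsilon,\delta})$ yields the discrete Liapunov inequality
\begin{equation*}
\mathcal{E}_n(u_{\varepsilon,\delta})+\tau\sum_{i=1}^N\int_\Omega\langle D^2\Phi_n(u_{\varepsilon,\delta})M_{\varepsilon,\delta}(u_{\varepsilon,\delta})\partial_i u_{\varepsilon,\delta},\partial_i u_{\varepsilon,\delta}\rangle\,\mathrm{d}x\le\mathcal{E}_n(U),
\end{equation*}
for all $n\ge 1$, the case $n=1$ requiring a further regularization near the axes because $\nabla\Phi_1$ is singular there. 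Specializing to $n=1$ produces the $H^1$-bound implicit in~\eqref{ex2b}, whereas letting $n\to\infty$ and using that $\mathcal{E}_n^{1/n}$ controls $\|\cdot\|_\infty$, as recalled in the introduction, delivers an $L_\infty$-bound on $u_{\varepsilon,\delta}$ in terms of $\|U\|_\infty$ only. Together with the pointwise convergence $M_{\varepsilon,\delta}\to M$, these uniform $H^1\cap L_\infty$ estimates allow a subsequence of $(u_{\varepsilon,\delta})$ to converge weakly in $H^1$ and almost everywhere to some $u$ solving~\eqref{ex1a}--\eqref{ex1b}, and the inequalities~\eqref{ex2} and~\eqref{ex2b} are then inherited by weak lower semicontinuity.
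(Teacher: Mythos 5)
Your outline follows the paper's general strategy (two-parameter regularisation, fixed point for the frozen problem, Liapunov inequalities, compactness), but it leaves the genuinely hard step unresolved and, as formulated, that step would not go through. You require a bounded regularisation $M_{\varepsilon,\delta}$ such that $D^2\Phi_n(X)M_{\varepsilon,\delta}(X)$ stays positive semidefinite for \emph{every} $n\ge 1$, you acknowledge this is ``the main obstacle'', and you do not construct it. This is precisely the point where the proof lives or dies: the symmetry and positive semidefiniteness of $D^2\Phi_n(X)M(X)$ rest on exact cancellations between the coefficients $a_{j,n}$ and the linear-in-$X$ entries of $M$ in \eqref{x1}, and any truncation of the entries of $M$ destroys them. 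The paper does \emph{not} achieve your design constraint; instead it truncates ($M^\rho$ as in \eqref{momar}), adds $\varepsilon I_2$, and multiplies by the exponential damping factor $\lambda_\varepsilon$, accepting that the Liapunov inequality only holds up to an error $\tau C(n)\rho^{n-1}e^{-\varepsilon\rho}\|\nabla u_\varepsilon^\rho\|_2^2$, see \eqref{ex12c}. Controlling that error requires an a priori bound $0\le u_{\varepsilon,j}^\rho\le\rho$ which comes from the invariance property of the truncation (part (ii) of \Cref{lem.ap1}, yielding \eqref{ex12a'}), \emph{not} from the Liapunov functionals; this matters because your plan extracts the $L_\infty$-bound from $\mathcal{E}_n^{1/n}$ as $n\to\infty$, which presupposes the exact inequalities you have not secured — a circularity once the regularisation is a truncation. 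It also forces a specific order of limits ($\rho\to\infty$ at fixed $\varepsilon$, so that $\rho^{n-1}e^{-\varepsilon\rho}\to 0$, and only then $\varepsilon\to 0$ using the $\varepsilon$-uniform bound \eqref{ex12bac} and the entropy dissipation of \Cref{lem.ex4}), a subtlety absent from your sketch.

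A second, smaller gap: for the frozen-coefficient problem, Lax--Milgram cannot be applied directly to the bilinear form built from $M_{\varepsilon}^\rho(v)$, because the symmetric part of $M(X)$ (and of its truncations) is not positive semidefinite — adding $\varepsilon I_2$ does not make the form coercive uniformly in the truncation level. The paper first symmetrises with $S=\tfrac{bd}{2}D^2\Phi_2\in\mathbf{SPD}_2(\mathbb{R})$, for which $SM^\rho(X)$ is positive semidefinite and $SM_\varepsilon^\rho(X)\in\mathbf{SPD}_2(\mathbb{R})$, and runs Lax--Milgram and the Leray--Schauder argument on the symmetrised form (\Cref{lem.ap1} with $A=S$, $B=M_\varepsilon^\rho$); the same matrix $S$ also yields the $H^1$-estimate \eqref{ex12e}. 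Your remaining steps (the $\eta$-regularisation of $\ln$ for the $n=1$ entropy, weak lower semicontinuity, passage to the limit in the nonlinear terms) are consistent with the paper, but without the actual regularisation compatible with the family $(\Phi_n)_n$ and without the symmetrisation, the proposal does not yet constitute a proof.
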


As already mentioned, several steps are involved in the proof of \Cref{P:1}. 
We begin with the existence of bounded weak solutions to an auxiliary elliptic system which shares the same structure with~\eqref{ex1},
 but has bounded coefficients instead of linearly growing ones, see Section~\ref{sec2.1}. As a next step, we introduce in Section~\ref{sec2.2} the approximation to \eqref{ex1} which is derived from~\eqref{ex1} by replacing the matrix $M(\cdot)$ defined in~\eqref{x1} by a suitable invertible and bounded matrix $M_\varepsilon^\rho(\cdot)$ with~$(\varepsilon,\rho)\in (0,1)\times (1,\infty)$. We emphasize here once more that the matrix $M_\varepsilon^\rho(\cdot)$ is designed in such a way that the inequalities~\eqref{ex2} and~\eqref{ex2b} are not significantly altered. 
  Passing to the limit, first as $\rho\to\infty$, and then as $\varepsilon\to 0$, is then performed in \Cref{sec2.3} and \Cref{sec2.4}, respectively, this last step completing the proof of \Cref{P:1}. 

Throughout this section, $C$ and $(C_l)_{l\ge 0}$ denote various positive constants depending only on~$N$,~$\Omega$, and $(a,\, b,\, c,\, d)$. 
Dependence upon additional parameters will be indicated explicitly.

\subsection{An auxiliary elliptic system}\label{sec2.1}

Let $A=(a_{jk})_{1\le j,k\le 2}$ and~$B=(b_{jk})_{1\le j,k\le 2}$ be chosen such that ${A\in \mathbf{SPD}_2(\mathbb{R})}$, ${B\in BC(\mathbb{R}^2,\mathbf{M}_2(\mathbb{R}))}$, and  $AB(X)\in \mathbf{SPD}_2(\mathbb{R})$ for all $X\in \mathbb{R}^2$.
Moreover, we assume that there is $\delta_1>0$ such that
\begin{equation}
	\langle AB(X)\xi,\xi \rangle \ge \delta_1 |\xi|^2\,, \qquad (X,\xi)\in \mathbb{R}^2\times\mathbb{R}^2\,. \label{ap1}
\end{equation}
Since $A\in \mathbf{SPD}_2(\mathbb{R})$, there is also $\delta_2>0$ such that
\begin{equation}
	\langle A\xi,\xi \rangle \ge\delta_2 |\xi|^2\,, \qquad \xi\in\mathbb{R}^2\,. \label{ap2}
\end{equation}

\begin{lemma}\label{lem.ap1}
	Given $\tau>0$ and $U=(U_1,U_2)\in L_2(\Omega,\mathbb{R}^2)$, there is $u=(u_1,u_2)\in H^1(\Omega,\mathbb{R}^2)$ which solves the nonlinear equation
	\begin{equation}
		\int_\Omega \left[ \langle u , v \rangle + \tau \sum_{i=1}^N\langle B(u) \partial_i u , \partial_i v \rangle \right]\ \mathrm{d}x = \int_\Omega \langle U , v \rangle\ \mathrm{d}x\,, \qquad v\in H^1(\Omega,\mathbb{R}^2)\,. \label{ap3}
	\end{equation}
	Additionally:
	\begin{itemize}
		\item[(i)] If 
		\begin{equation}
			\begin{split}
				b_{11}(X) \geq  b_{12}(X) & = 0\,, \qquad X\in (-\infty,0)\times \mathbb{R}\,, \\
				b_{22}(X) \geq b_{21}(X) & = 0\,, \qquad X\in \mathbb{R}\times (-\infty,0)\,,
			\end{split} \label{ap10}
		\end{equation}
		and if $U(x)\in [0,\infty)^2$ for a.a. $x\in\Omega$, then $u(x)\in [0,\infty)^2$ for a.a. $x\in\Omega$.
		\item[(ii)] If there exists $\rho>0$ such that 
		\begin{equation}
			\begin{split}
				b_{11}(X) \geq  b_{12}(X) & = 0\,, \qquad X\in (\rho,\infty)\times \mathbb{R}\,, \\
				b_{22}(X) \geq b_{21}(X) & = 0\,, \qquad X\in \mathbb{R}\times (\rho,\infty)\,,
			\end{split} \label{ap10'}
		\end{equation}
		and if $\max\{U_1, U_2\} \leq \rho$ a.e. in $\Omega$, then $\max\{u_1, u_2\}\leq \rho$ a.e. in $\Omega$.
	\end{itemize}
	
\end{lemma}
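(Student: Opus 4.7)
The plan is to prove existence via a Schauder fixed-point argument, after first recasting the weak formulation in a form where ellipticity is apparent. Since $A\in\mathbf{SPD}_2(\mathbb{R})$ is invertible and symmetric, the linear map $v\mapsto Av$ is an isomorphism of $H^1(\Omega,\mathbb{R}^2)$, so \eqref{ap3} is equivalent (via the substitution $v\mapsto Av$ in the test function) to
\begin{equation*}
\int_\Omega\Bigl[\langle Au,v\rangle + \tau\sum_{i=1}^N \langle AB(u)\partial_i u, \partial_i v\rangle\Bigr]\,\mathrm{d}x = \int_\Omega \langle AU,v\rangle\,\mathrm{d}x\,,\qquad v\in H^1(\Omega,\mathbb{R}^2)\,.
\end{equation*}
This reformulation is the crux: although $B(X)$ alone need not be elliptic, $AB(X)$ is uniformly positive definite thanks to~\eqref{ap1}, and $A$ itself is uniformly positive definite thanks to~\eqref{ap2}.

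For the fixed-point argument, freeze the nonlinearity: given $w\in L_2(\Omega,\mathbb{R}^2)$, the bilinear form
\begin{equation*}
a_w(u,v) := \int_\Omega \Bigl[\langle Au,v\rangle + \tau \sum_{i=1}^N \langle AB(w)\partial_i u, \partial_i v\rangle\Bigr]\,\mathrm{d}x
\end{equation*}
is continuous on $H^1(\Omega,\mathbb{R}^2)\times H^1(\Omega,\mathbb{R}^2)$ because $A$ is fixed and $B\in BC(\mathbb{R}^2,\mathbf{M}_2(\mathbb{R}))$, and coercive with coercivity constant $\min\{\delta_2,\tau\delta_1\}$ by~\eqref{ap1} and~\eqref{ap2}. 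Lax--Milgram yields a unique $u = T(w)\in H^1(\Omega,\mathbb{R}^2)$ solving $a_w(u,v) = \int_\Omega \langle AU,v\rangle\,\mathrm{d}x$ for all $v$, together with the $w$-independent a~priori estimate $\|T(w)\|_{H^1}\le C(\tau,A,\delta_1,\delta_2)\|U\|_2$. Hence $T:L_2(\Omega,\mathbb{R}^2)\to L_2(\Omega,\mathbb{R}^2)$ has its range in a fixed bounded subset of $H^1(\Omega,\mathbb{R}^2)$, so $T$ is compact by Rellich's embedding and self-maps any sufficiently large closed ball in $L_2$. Continuity of $T$ follows by a subsequence argument: if $w_n\to w$ in $L_2$, one passes to a pointwise a.e. convergent subsequence, exploits continuity and boundedness of $B$ together with the uniform $H^1$ bound to pass to the limit (weakly in $H^1$, strongly in $L_2$) in the linear problem, and identifies the limit with $T(w)$ via uniqueness. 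Schauder's theorem then provides a fixed point, which is the sought solution of~\eqref{ap3}.

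The sign-preserving statements (i) and (ii) are obtained by direct truncation arguments on the solution $u$. For (i), testing~\eqref{ap3} with the admissible $v=(-u_{1,-},0)\in H^1(\Omega,\mathbb{R}^2)$ (where $u_{1,-}=\max\{-u_1,0\}$) produces $\int_\Omega u_{1,-}^2\,\mathrm{d}x$ on the mass side and $-\int_\Omega U_1 u_{1,-}\,\mathrm{d}x\le 0$ on the right; on $\{u_1<0\}$ hypothesis~\eqref{ap10} gives $b_{12}(u)=0$ and $b_{11}(u)\ge 0$, so the diffusive contribution reduces to $\tau\sum_i\int_{\{u_1<0\}}b_{11}(u)|\partial_i u_1|^2\,\mathrm{d}x\ge 0$. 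Hence $u_{1,-}\equiv 0$, and the analogous test $v=(0,-u_{2,-})$ delivers $u_2\ge 0$. Part (ii) is symmetric: test with $v=((u_1-\rho)_+,0)$, use $U_1\le \rho$ to estimate $(u_1-U_1)(u_1-\rho)_+\ge ((u_1-\rho)_+)^2$, and exploit~\eqref{ap10'} on $\{u_1>\rho\}$ exactly as above to drop the diffusion; then use $v=(0,(u_2-\rho)_+)$.

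The main obstacle is the first step: the natural bilinear form associated with~\eqref{ap3} is not directly coercive on $H^1(\Omega,\mathbb{R}^2)\times H^1(\Omega,\mathbb{R}^2)$, since only $AB$ (and not $B$) is assumed elliptic. The symmetry $A^T=A$ is essential, as it is precisely what allows the substitution $v\mapsto Av$ to transfer the ellipticity of $AB$ into a coercive bilinear form while preserving the problem; once this observation is in place, the remainder of the proof is a routine Schauder–plus–truncation argument.
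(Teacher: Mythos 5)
Your proposal is correct and follows essentially the same route as the paper: symmetrize with $A$ so that \eqref{ap1}--\eqref{ap2} give coercivity, solve the frozen problem by Lax--Milgram with a $w$-independent $H^1$ bound, obtain a fixed point by compactness (you invoke Schauder on a large ball where the paper uses Leray--Schauder, an immaterial difference given the uniform a priori estimate), and recover \eqref{ap3} via the symmetry and invertibility of $A$. Your truncation tests for (i) and (ii), done componentwise rather than jointly as in the paper, amount to the same computation since \eqref{ap10} and \eqref{ap10'} kill the off-diagonal terms on the relevant level sets.
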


\begin{proof} 
The proof of \Cref{lem.ap1} is rather classical and it is actually similar to that of \cite[Lemma~B.1]{LM2021b}. We nevertheless sketch it below for the sake of completeness.

\medskip

\noindent\textbf{Step~1.} To set up a fixed point scheme, we consider $u\in L_2(\Omega,\mathbb{R}^2)$ and define a bilinear form $b_u$ on~${H^1(\Omega,\mathbb{R}^2)}$ by
	\begin{equation*}
		b_u(v,w) := \int_\Omega \left[ \langle Av , w \rangle + \tau \sum_{i=1}^N\langle AB(u) \partial_i v , \partial_i w \rangle \right]\ \mathrm{d}x\,, \qquad (v,w)\in H^1(\Omega,\mathbb{R}^2)\times H^1(\Omega,\mathbb{R}^2) \,.
	\end{equation*} 
	Owing to \eqref{ap1} and \eqref{ap2},
	\begin{equation}
		b_u(v,v) \ge \delta_0 \|v\|_{H^1}^2\,, \qquad v\in H^1(\Omega,\mathbb{R}^2)\,, \label{ap4}
	\end{equation}
	where $\delta_0:=\min\{\tau \delta_1, \delta_2\},$ while the boundedness of $B$ guarantees that
	\begin{equation*}
		|b_u(v,w)| \le b_* \|v\|_{H^1} \|w\|_{H^1}\,, \qquad (v,w)\in H^1(\Omega,\mathbb{R}^2)\times H^1(\Omega,\mathbb{R}^2) \,,
	\end{equation*}
	with
	\begin{equation*}
		b_* := 2\max_{1\le j,k\le 2}\{|a_{jk}|\} \left( 1 + 2\tau \max_{1\le j,k\le 2}\{\|b_{jk}\|_\infty\} \right)\,.
	\end{equation*}
	We then infer from Lax-Milgram's theorem that there is a unique $\mathcal{V}[u]\in H^1(\Omega,\mathbb{R}^2)$ such that
	\begin{equation}
		b_u(\mathcal{V}[u],w) = \int_\Omega \langle AU , w \rangle\ \mathrm{d}x\,, \qquad w\in H^1(\Omega,\mathbb{R}^2)\,. \label{ap5}
	\end{equation}
	An immediate consequence of  \eqref{ap4}, \eqref{ap5} (with $w=\mathcal{V}[u]$), and H\"older's inequality is the following estimate:
	\begin{equation*}
		\delta_0 \|\mathcal{V}[u]\|_{H^1}^ 2 \le b_u(\mathcal{V}[u],\mathcal{V}[u]) \le \|AU\|_2 \|\mathcal{V}[u]\|_2 \le \|AU\|_2 \|\mathcal{V}[u]\|_{H^1}\,. 
	\end{equation*}
	Hence
	\begin{equation}
		\|\mathcal{V}[u]\|_{H^1} \le \frac{\|AU\|_2}{\delta_0}\,. \label{ap6}
	\end{equation}
	
	We next argue as in the proof of \cite[Lemma~B.1]{LM2021b} to show that the map $\mathcal{V}$ is continuous and compact from $L_2(\Omega,\mathbb{R}^2)$ to itself, the proof relying on \eqref{ap6},
	 the compactness of the embedding of~$H^1(\Omega,\mathbb{R}^2)$ in $L_2(\Omega,\mathbb{R}^2)$, and the continuity and boundedness of $B$. 
	
	Consider now $\theta\in [0,1]$ and a function $u\in L_2(\Omega,\mathbb{R}^2)$ satisfying $u = \theta\mathcal{V}[u]$. Then $u\in H^1(\Omega,\mathbb{R}^2)$ and, in view of \eqref{ap6}, 
	\begin{equation*}
		\|u\|_2 = \theta \|\mathcal{V}[u]\|_2 \le \|\mathcal{V}[u]\|_2 \le \|\mathcal{V}[u]\|_{H^1} \le \frac{\|AU\|_2}{\delta_0}\,. 
	\end{equation*}
	Thanks to the above bound and the continuity and compactness properties of the map $\mathcal{V}$ in~$L_2(\Omega,\mathbb{R}^2)$, 
	we are in a position to apply Leray-Schauder's fixed point theorem, see \cite[Theorem~11.3]{GT2001} for instance, and conclude that the map $\mathcal{V}$ has a fixed point $u\in L_2(\Omega,\mathbb{R}^2)$. Since $\mathcal{V}$ ranges in $H^1(\Omega,\mathbb{R}^2)$, the function~$u$ actually belongs to $H^1(\Omega,\mathbb{R}^2)$ and satisfies 
	\begin{equation*}
		b_u(u,w) = \int_\Omega \langle AU , w \rangle\ \mathrm{d}x\,, \qquad w\in H^1(\Omega,\mathbb{R}^2)\,.
	\end{equation*}
	Finally, given $v\in H^1(\Omega,\mathbb{R}^2)$, the function $w=A^{-1}v$ also belongs to $H^1(\Omega,\mathbb{R}^2)$ and we infer from
	the above identity and the symmetry of $A$ that
	\begin{align*}
		\int_\Omega \langle U , v \rangle\ \mathrm{d}x  = \int_\Omega \langle AU , w \rangle\ \mathrm{d}x & = b_u(u,w) =b_u(u,A^{-1}v) \\
		  & = \int_\Omega \Big[ \langle u , v \rangle + \tau \sum_{i=1}^N\langle B(u) \partial_i u , \partial_i v \rangle \Big]\ \mathrm{d}x \,.
	\end{align*}
We have thus constructed a weak solution $u\in H^1(\Omega,\mathbb{R}^2)$ to \eqref{ap3}.
	
\medskip
	
\noindent\textbf{Step~2.} We now turn to the sign-preserving property~(i) and assume that $U(x)\in [0,\infty)^2$ for a.a.~${x\in\Omega}$. Let $u\in H^1(\Omega,\mathbb{R}^2)$ be a weak solution to \eqref{ap3} and set~$\varphi:=-u$. Then $(\varphi_{1,+},\varphi_{2,+})$ belongs to $H^1(\Omega,\mathbb{R}^2)$ and it follows from \eqref{ap3} that
\begin{equation}
	\begin{split}
		& \int_\Omega \Big[ \varphi_1 \varphi_{1,+}  + \varphi_2 \varphi_{2,+} + \tau \sum_{i=1}^N\sum_{j,k=1}^2 b_{jk}(u) \partial_i \varphi_k \partial_i (\varphi_{j,+}) \Big]\ \mathrm{d}x \\
		& \hspace{5cm} = - \int_\Omega \left( U_1 \varphi_{1,+}  + U_2 \varphi_{2,+} \right)\ \mathrm{d}x \le 0\,. 
	\end{split} \label{ap11}
\end{equation}
	We now infer from \eqref{ap10} that, for $1\le i \le N$, 
	\begin{align*}
		b_{11}(u) \partial_i\varphi_1 \partial_i\varphi_{1,+} & = b_{11}(u) \mathbf{1}_{(-\infty,0)}(u_1) |\partial_i u_1|^2\geq  0\,, \\
		b_{12}(u) \partial_i\varphi_2 \partial_i\varphi_{1,+} & = b_{12}(u) \mathbf{1}_{(-\infty,0)}(u_1) \partial_i u_1 \partial_i u_2 = 0\,, \\
		b_{21}(u) \partial_i\varphi_1 \partial_i\varphi_{2,+} & = b_{21}(u) \mathbf{1}_{(-\infty,0)}(u_2) \partial_i u_1 \partial_i u_2 = 0\,, \\
		b_{22}(u) \partial_i\varphi_2 \partial_i\varphi_{2,+} & = b_{22}(u) \mathbf{1}_{(-\infty,0)}(u_2) |\partial_i u_2|^2 \geq0\,,
	\end{align*}
	so that the second term on the left-hand side of \eqref{ap11} is non-negative. Consequently, \eqref{ap11} gives
	\begin{equation*}
		\int_\Omega \big[ |\varphi_{1,+}|^2  + |\varphi_{2,+}|^2 \big]\ \mathrm{d}x \le 0\,, 
	\end{equation*}
	which implies that $\varphi_{1,+}=\varphi_{2,+}=0$ a.e. in $\Omega$. Hence, $u(x)\in [0,\infty)^2$ for a.a. $x\in\Omega$ as claimed.
	
\medskip

\noindent\textbf{Step~3.} It remains to prove (ii). We thus assume that $\max\{U_1,U_2\} \le \rho$ a.e. in $\Omega$ and consider a weak solution $u\in H^1(\Omega,\mathbb{R}^2)$ to~\eqref{ap3}. As $v=((u_1-\rho)_+, (u_2-\rho)_+)$  belongs to $H^1(\Omega,\mathbb{R}^2)$, we deduce from~\eqref{ap3} that 
\begin{align*}
	\int_\Omega \Big[ \sum_{j=1}^2 (u_j-U_j) (u_j-\rho)_+ + \tau \sum_{i=1}^N\sum_{j,k=1}^2 b_{jk}(u) \partial_i u_k \partial_i (u_j-\rho)_+ \Big]\ \mathrm{d}x=0\,.
\end{align*}
On the one hand,
\begin{equation*}
	u_j-U_j \ge u_j - \rho \;\text{ a.e. in}\; \Omega\,, \qquad j=1,2\,,
\end{equation*}
so that
\begin{equation*}
	(u_j-U_j) (u_j-\rho)_+ \ge (u_j-\rho) (u_j-\rho)_+ = (u_j-\rho)_+^2 \;\text{ a.e. in }\; \Omega\,, \qquad j=1,2\,.
\end{equation*}
On the other hand, we infer from \eqref{ap10'} that, for $1\le i \le N$, 
\begin{align*}
	b_{11}(u) \partial_i u_1 \partial_i(u_1-\rho)_+ & = b_{11}(u) \mathbf{1}_{(\rho,\infty)}(u_1) |\partial_i u_1|^2\geq  0\,, \\
	b_{12}(u) \partial_i u_2 \partial_i(u_1-\rho)_+ & = b_{12}(u) \mathbf{1}_{(\rho,\infty)}(u_1) \partial_i u_1 \partial_i u_2 = 0\,, \\
	b_{21}(u) \partial_i u_1 \partial_i(u_2-\rho)_+ & = b_{21}(u) \mathbf{1}_{(\rho,\infty)}(u_2) \partial_i u_1 \partial_i u_2 = 0\,, \\
	b_{22}(u) \partial_i u_2 \partial_i(u_2-\rho)_+ & = b_{22}(u) \mathbf{1}_{(\rho,\infty)}(u_2) |\partial_i u_2|^2 \geq0\,.
\end{align*}
Therefore, 
\begin{equation*}
	\sum_{j=1}^2 \int_\Omega (u_j-\rho)_+^2\, \mathrm{d}x\leq 0\,,
\end{equation*}
from which we deduce that $\max\{u_1,u_2\}\le\rho$ a.e. in $\Omega$.
\end{proof}

\subsection{A regularised system}\label{sec2.2}

We now introduce the two-parameter approximation of~\eqref{ex1} on which the subsequent analysis relies. Specifically, given $\rho>1$, we define 
\begin{equation*}
\alpha_\rho(z):=
\left\{
\begin{array}{clll}
	0&,& \quad z\leq 0,\\
	z&,& \quad 0\leq z\leq \rho-1,\\
	(\rho-1)(\rho-z)&,& \quad \rho-1\leq z\leq \rho,\\
	0,&,&\quad z\geq\rho,
\end{array}
\right.
\end{equation*}
and observe that $\alpha_\rho\in BC(\mathbb{R})$ with 
	\begin{equation*}
		0\le \alpha_\rho(z)\le \min\{\rho , z_+\}\,, \qquad z\in\mathbb{R}\,.
\end{equation*} 
Next, for $\varepsilon\in (0,1)$ and $X\in\mathbb{R}^2$, we set
\begin{equation*}
M_{\varepsilon}^\rho(X)  = (m_{\varepsilon, jk}^\rho(X))_{1\le j,k\le 2}:= \varepsilon I_2 + \lambda_\varepsilon((X_{1,+},X_{2,+}))M^\rho(X),
\end{equation*}
where 
\begin{equation}\label{momar}
	M^\rho(X) =  (m_{jk}^\rho(X))_{1\le j,k\le 2} :=  
	\begin{pmatrix}
		a\alpha_\rho(X_1) & b\alpha_\rho(X_1) \\
		c\alpha_\rho(X_2) & d\alpha_\rho(X_2)
	\end{pmatrix}\,, \qquad X\in\mathbb{R}^2\,,
\end{equation}
and 
\begin{equation*}
	\lambda_\varepsilon(X) := \frac{2}{1+\exp{[\varepsilon (X_1+X_2)}]}\,, \qquad X\in\mathbb{R}^2\,.
\end{equation*}
Note that $(M^\rho)_{\rho>1}$ converges to $M$, defined in~\eqref{x1}, locally uniformly in $[0,\infty)^2$ as $\rho\to\infty$, while~$(\lambda_\varepsilon)_{\varepsilon\in (0,1)}$ converges to $1$ locally uniformly in $\mathbb{R}^2$ as $\varepsilon\to 0$. 
In fact, for $R>0$,
\begin{equation}
	|\lambda_\varepsilon(X) - 1| \le 2 R \varepsilon\,, \qquad X\in [-R,R]^2\,. \label{UL}
\end{equation}

The outcome of this section is that, given $\tau>0$, $\varepsilon\in (0,1)$, $\varrho>1$, and $U\in L_{\infty,+}(\Omega,\mathbb{R}^2)$, there is a weak solution $u_\varepsilon^\rho\in H^1(\Omega,\mathbb{R}^2)\cap L_{\infty,+}(\Omega,\mathbb{R}^2)$ to 
\begin{equation*}
	u_\varepsilon^\rho - \tau \sum_{i=1}^N\partial_i \big( M_\varepsilon^\rho(u_\varepsilon^\rho) \partial_i u_\varepsilon^\rho \big) = U \;\;\text{ in }\;\; \Omega\,,
\end{equation*}
which satisfies an appropriate weak version of~\eqref{ex2}, as stated below. The next lemma is actually the building block of the proof of \Cref{P:1}. 

\begin{lemma}\label{lem.ex2}
	Given  $\tau>0$, $U=(F,G)\in L_{\infty,+}(\Omega,\mathbb{R}^2)$, $\varepsilon\in (0,1)$, and $\rho\geq \max\{1,\|F\|_\infty,\|G\|_\infty\}$, there is a weak solution $u_{\varepsilon}^\rho=(u_{\varepsilon,1}^\rho, u_{\varepsilon,2}^\rho)\in H^1(\Omega,\mathbb{R}^2)\cap L_{\infty,+}(\Omega,\mathbb{R}^2)$ to 
	\begin{equation}
		\int_\Omega \Big[ \langle u_{\varepsilon}^\rho , v \rangle + \tau \sum_{i=1}^N\langle M_{\varepsilon }^\rho(u_{\varepsilon}^\rho) \partial_i u_{\varepsilon}^\rho , \partial_i v \rangle \Big]\ \mathrm{d}x 
		= \int_\Omega \langle U , v \rangle\ \mathrm{d}x\,, \qquad v\in H^1(\Omega,\mathbb{R}^2)\,, \label{ex12a}
	\end{equation}
	which additionally satisfies
	\begin{align}
		\max\{\|u_{\varepsilon,1}^\rho\|_\infty,\,\|u_{\varepsilon,2}^\rho\|_\infty\}&\leq \rho\label{ex12a'},\\[1ex]
		\|u_{\varepsilon}^\rho\|_2&\leq C_0\|U\|_2,\label{ex12d}\\[1ex]
		\|\nabla u_{\varepsilon}^\rho\|_2&\leq C_1(\tau,\varepsilon)\|U\|_2\,.\label{ex12e}
	\end{align}
	
Moreover, given $n\ge 2$, there exists a constant $C(n)$
such that 
\begin{equation}
	\mathcal{E}_n(u_\varepsilon^\rho)\le \tau C(n) \frac{\rho^{n-1}}{e^{\varepsilon\rho}}\|\nabla u_\varepsilon^\rho\|_2^2\ + \mathcal{E}_n(U)\,. \label{ex12c}
\end{equation}
\end{lemma}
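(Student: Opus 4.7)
The plan is to obtain \Cref{lem.ex2} by applying \Cref{lem.ap1} to the regularised coefficients $B := M_\varepsilon^\rho$, together with the symmetrising matrix
\begin{equation*}
A := \begin{pmatrix} ac & bc \\ bc & bd \end{pmatrix},
\end{equation*}
whose determinant $bc(ad-bc)$ is positive by~\eqref{condabcd}, so that $A \in \mathbf{SPD}_2(\mathbb{R})$. A direct computation shows that $AM^\rho(X)$ is symmetric with non-negative trace and determinant $bc(ad-bc)^2\alpha_\rho(X_1)\alpha_\rho(X_2)\geq 0$ for every $X\in\mathbb{R}^2$, hence is positive semi-definite. Therefore $AM_\varepsilon^\rho(X) = \varepsilon A + \lambda_\varepsilon((X_{1,+},X_{2,+})) AM^\rho(X)$ belongs to $\mathbf{SPD}_2(\mathbb{R})$ with smallest eigenvalue at least $\varepsilon\delta_2$, where $\delta_2>0$ is the smallest eigenvalue of $A$, which verifies~\eqref{ap1}. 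Boundedness and continuity of $\alpha_\rho$ and $\lambda_\varepsilon$ give $M_\varepsilon^\rho\in BC(\mathbb{R}^2,\mathbf{M}_2(\mathbb{R}))$, while the support inclusion $\{\alpha_\rho>0\}\subset(0,\rho)$ directly yields the sign-preservation conditions~\eqref{ap10} and~\eqref{ap10'}. Since $\max\{\|F\|_\infty,\|G\|_\infty\}\leq\rho$, \Cref{lem.ap1}(i)-(ii) produces $u_\varepsilon^\rho \in H^1(\Omega,\mathbb{R}^2)\cap L_{\infty,+}(\Omega,\mathbb{R}^2)$ solving~\eqref{ex12a} and satisfying~\eqref{ex12a'}.

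For~\eqref{ex12d} and~\eqref{ex12e} I test~\eqref{ex12a} with $v := A u_\varepsilon^\rho \in H^1(\Omega,\mathbb{R}^2)$. Exploiting the symmetry of $A$ (which turns the left-hand side into $\int_\Omega \langle A u_\varepsilon^\rho, u_\varepsilon^\rho\rangle\, \mathrm{d}x + \tau\sum_i \int_\Omega \langle AM_\varepsilon^\rho(u_\varepsilon^\rho)\partial_i u_\varepsilon^\rho, \partial_i u_\varepsilon^\rho\rangle\, \mathrm{d}x$ and the right-hand side into $\int_\Omega \langle AU, u_\varepsilon^\rho\rangle\, \mathrm{d}x$), together with the coercivity bounds $\langle A\xi,\xi\rangle\geq\delta_2|\xi|^2$ and $\langle AM_\varepsilon^\rho(u_\varepsilon^\rho)\xi,\xi\rangle\geq\varepsilon\delta_2|\xi|^2$, Cauchy-Schwarz, and Young's inequality, leads to
\begin{equation*}
\frac{\delta_2}{2}\|u_\varepsilon^\rho\|_2^2 + \tau\varepsilon\delta_2\|\nabla u_\varepsilon^\rho\|_2^2 \leq \frac{\|AU\|_2^2}{2\delta_2}\,,
\end{equation*}
which yields~\eqref{ex12d} with $C_0$ independent of $\tau,\varepsilon$ and~\eqref{ex12e} with $C_1(\tau,\varepsilon)$ of order $(\tau\varepsilon)^{-1/2}$.

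The heart of the proof, and the main obstacle, is the entropy-type inequality~\eqref{ex12c}. I test~\eqref{ex12a} against $v := D\Phi_n(u_\varepsilon^\rho)\in H^1(\Omega,\mathbb{R}^2)$, which is admissible since $u_\varepsilon^\rho\in H^1\cap L_\infty$ and $\Phi_n$ is a polynomial, the chain rule then giving $\partial_i v = D^2\Phi_n(u_\varepsilon^\rho)\partial_i u_\varepsilon^\rho$. Writing $u:=u_\varepsilon^\rho$, the symmetry of $D^2\Phi_n$ together with the convexity of $\Phi_n$ on $[0,\infty)^2$ proved in \Cref{secA} yields
\begin{equation*}
\mathcal{E}_n(u) - \mathcal{E}_n(U) \leq \int_\Omega \langle D\Phi_n(u), u-U\rangle\, \mathrm{d}x = -\tau\sum_{i=1}^N \int_\Omega \langle D^2\Phi_n(u) M_\varepsilon^\rho(u)\partial_i u, \partial_i u\rangle\, \mathrm{d}x\,.
\end{equation*}
I then decompose $D^2\Phi_n(u)M_\varepsilon^\rho(u) = \varepsilon D^2\Phi_n(u) + \lambda_\varepsilon(u) D^2\Phi_n(u)M^\rho(u)$. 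Since $u\in[0,\rho]^2$ a.e.\ and $\Phi_n$ is convex on $[0,\infty)^2$, the first summand contributes non-negatively. For the second, I split $\Omega$ into a \emph{good} region $\{u\in[0,\rho-1]^2\}$, where $M^\rho(u)=M(u)$ and the positive semi-definiteness of $D^2\Phi_n(X)M(X)$ on $[0,\infty)^2$ proved in \Cref{secA} again gives a non-negative contribution, and a complementary \emph{bad} region, where $u_1+u_2\geq\rho-1$ so that $\lambda_\varepsilon(u)\leq 2e^{-\varepsilon(\rho-1)}\leq 2e\cdot e^{-\varepsilon\rho}$. Combined with the elementary estimate $|\langle D^2\Phi_n(u)M^\rho(u)\xi,\xi\rangle|\leq C(n)\rho^{n-1}|\xi|^2$ on $[0,\rho]^2$, which follows from the homogeneity of $\Phi_n$ and from $\|M^\rho\|_\infty\leq C\rho$, the bad contribution is controlled by $\tau C(n)\rho^{n-1}e^{-\varepsilon\rho}\|\nabla u\|_2^2$, from which~\eqref{ex12c} follows. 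The delicate point is precisely that the truncation $\alpha_\rho$ destroys the positivity of $D^2\Phi_n M^\rho$ exactly on $\{u_1+u_2\geq\rho-1\}$, and the weight $\lambda_\varepsilon$, with its exponential decay in $u_1+u_2$, is designed to compensate so that the defect vanishes as $\rho\to\infty$ for each fixed $\varepsilon>0$, consistently with the two-step limiting procedure $\rho\to\infty$ then $\varepsilon\to 0$.
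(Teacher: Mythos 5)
Your proposal is correct and follows essentially the same route as the paper: the same symmetrizer $S=\begin{pmatrix} ac & bc\\ bc & bd\end{pmatrix}$ fed into \Cref{lem.ap1}, the same test functions $Su_\varepsilon^\rho$ and $D\Phi_n(u_\varepsilon^\rho)$, and the same exploitation of the exponential smallness of $\lambda_\varepsilon$ where the truncation $\alpha_\rho$ is active. The only cosmetic difference is that you split $\Omega$ into the region where $M^\rho(u)=M(u)$ and its complement and bound the full term $\lambda_\varepsilon\langle D^2\Phi_n M^\rho\partial_i u,\partial_i u\rangle$ on the bad set, whereas the paper writes $M^\rho=M+(M^\rho-M)$ and bounds only the difference term, which is supported on the same set $\{\max\{u_{\varepsilon,1}^\rho,u_{\varepsilon,2}^\rho\}\ge\rho-1\}$ — both give the stated bound $\tau C(n)\rho^{n-1}e^{-\varepsilon\rho}\|\nabla u_\varepsilon^\rho\|_2^2$.
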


\begin{proof}
Let $\varepsilon\in (0,1)$ and $\rho\geq \max\{1,\|F\|_\infty,\|G\|_\infty\}$. To deduce the existence result stated in \Cref{lem.ex2} from the already established \Cref{lem.ap1}, we first recast~\eqref{ex12a} in the form~\eqref{ap3}. First, owing to the definition of the function $\alpha_\rho$, the matrix $M_\varepsilon^\rho$ lies in $BC(\mathbb{R}^2,\mathbf{M}_2(\mathbb{R}))$ and satisfies 
\begin{subequations}\label{ex11}
	\begin{equation}
		0 \le m_{\varepsilon,jk}^\rho(X) \le \varepsilon + 2 \rho\max\{a,\, b,\, c,\, d\}\,, \qquad 1\le j,k\le 2\,, \ X\in\mathbb{R}^2\,, \label{ex11a}
\end{equation}
as well as 
\begin{equation}
	\begin{split}
		m_{\varepsilon,11}^\rho(X) \geq m_{\varepsilon,12}^\rho(X) & = 0\,, \qquad X\in (-\infty,0)\times \mathbb{R}\,, \\
		m_{\varepsilon,22}^\rho(X) \geq m_{\varepsilon,21}^\rho(X) & = 0\,, \qquad X\in \mathbb{R}\times (-\infty,0)\,.
	\end{split} \label{ex11b} 
\end{equation}
and
\begin{equation}
	\begin{split}
		m_{\varepsilon,11}^\rho(X) \geq m_{\varepsilon,12}^\rho(X) & = 0\,, \qquad X\in (\rho,\infty)\times \mathbb{R}\,, \\
		m_{\varepsilon,22}^\rho(X) \geq m_{\varepsilon,21}^\rho(X) & = 0\,, \qquad X\in \mathbb{R}\times (\rho,\infty)\,.
	\end{split} \label{ex11b'} 
\end{equation}

Next, according to \cite{DGJ1997}, it is natural to use the Hessian matrix of the convex function $\Phi_2$ to symmetrize~\eqref{ex12a}. We thus set
\begin{equation*}
	S:=\frac{bd}{2}D^2\Phi_2 =
	\begin{pmatrix}
		ac& bc \\[1ex]
		bc & bd
	\end{pmatrix}
\end{equation*}
and observe that $S$ is symmetric and positive definite by~\eqref{condabcd}. In addition, for all~${X\in\mathbb{R}^2}$,
\begin{equation*}
	SM_\varepsilon^\rho(X) = \varepsilon S + \lambda_\varepsilon((X_{1,+},X_{2,+})) SM^\rho(X)
\end{equation*}
with
\begin{equation*}
	SM^\rho(X) = \begin{pmatrix}
		a^2 c \alpha_\rho(X_1) + b c^2 \alpha_\rho(X_2) & abc \alpha_\rho(X_1) + bcd \alpha_\rho(X_2) \\
		& \\
		abc \alpha_\rho(X_1) + bcd \alpha_\rho(X_2)  & b^2 c \alpha_\rho(X_1) + bd^2 \alpha_\rho(X_2)
	\end{pmatrix} \in {\mathbf{Sym}_2(\mathbb{R})}\,.
\end{equation*}
Since $\mathrm{tr}(SM^\rho(X))\ge 0$ and 
\begin{equation*}
	\det(S M^\rho(X)) = \det(S) \det(M^\rho(X)) = bc (ad-bc)^2 \alpha_\rho(X_1) \alpha_\rho(X_2)\ge 0 
\end{equation*}
by \eqref{condabcd}, the matrix $SM^\rho(X)$ is positive semidefinite, so that the matrix  $SM_{\varepsilon}^\rho(X)$ belongs to $\mathbf{SPD}_2(\mathbb{R})$ for all~${X\in\mathbb{R}^2}$ with 
\begin{equation}
	\langle SM_{\varepsilon}^\rho(X)\xi,\xi\rangle  \ge \varepsilon \langle S\xi,\xi \rangle \ge \varepsilon \frac{\det(S)}{\mathrm{tr}(S)} |\xi|^2 = \varepsilon \frac{bc(ad-bc)}{ac+bd} |\xi|^2\,, \qquad \xi\in\mathbb{R}^2\,. \label{ex11c}
\end{equation}
\end{subequations}
According to the properties~\eqref{ex11}, we are now in a position to apply Lemma~\ref{lem.ap1} (with $A=S$ and~${B=M_{\varepsilon}^\rho}$) and deduce that there is a solution~${u_{\varepsilon}^\rho\in H^1(\Omega,\mathbb{R}^2)\cap L_{\infty,+}(\Omega,\mathbb{R}^2)}$ to~\eqref{ex12a} which satisfies~\eqref{ex12a'}. Moreover, it follows from~\eqref{ex12a} (with $v=Su_\varepsilon^\rho\in H^1(\Omega,\mathbb{R}^2)$), \eqref{ex11c}, and the positive definiteness of $S$,
\begin{equation*}
	\langle S\xi,\xi\rangle \ge \frac{bc(ad-bc)}{ac+bd} |\xi|^2\,, \qquad \xi\in\mathbb{R}^2\,,
\end{equation*}
that 
\begin{align*}
	\|SU\|_2 \|u_\varepsilon^\rho\|_2 \ge \int_\Omega \langle SU, u_\varepsilon^\rho \rangle\ \mathrm{d}x & = \int_\Omega \Big[ \langle u_\varepsilon^\rho , S u_\varepsilon^\rho \rangle + \tau \sum_{i=1}^N\langle M_\varepsilon^\rho(u_\varepsilon^\rho) \partial_i u_\varepsilon^\rho , \partial_i S u_\varepsilon^\rho \rangle \Big]\ \mathrm{d}x \\
	& = \int_\Omega \Big[ \langle Su_\varepsilon^\rho ,  u_\varepsilon^\rho \rangle + \tau \sum_{i=1}^N\langle S M_\varepsilon^\rho(u_\varepsilon^\rho) \partial_i u_\varepsilon^\rho , \partial_i u_\varepsilon^\rho \rangle \Big]\ \mathrm{d}x \\
	& \ge \frac{bc(ad-bc)}{ac+bd} \left( \|u_\varepsilon^\rho\|_2^2 + \tau \varepsilon \|\nabla u_\varepsilon^\rho\|_2^2 \right)\,.
\end{align*}
Owing to \eqref{condabcd}, we conclude that the estimates~\eqref{ex12d} and~\eqref{ex12e} are satisfied.   
	
It remains to establish the estimate~\eqref{ex12c}. Let $n\ge 2$. Since ${u_{\varepsilon}^\rho\in H^1(\Omega,\mathbb{R}^2)\cap L_\infty(\Omega,\mathbb{R}^2)}$, the vector field $D\Phi_n(u_\varepsilon^\rho)$ belongs to $H^1(\Omega,\mathbb{R}^2)$  and we infer from \eqref{ex12a} (with $v=D\Phi_n(u_\varepsilon^\rho)$) that 
\begin{equation}\label{pro1}
	\int_\Omega \Big[ \langle u_{\varepsilon}^\rho -U, D\Phi_n(u_\varepsilon^\rho) \rangle + \tau \sum_{i=1}^N\langle M_{\varepsilon }^\rho(u_{\varepsilon}^\rho) \partial_i u_{\varepsilon}^\rho , \partial_i D\Phi_n(u_\varepsilon^\rho)\rangle \Big]\ \mathrm{d}x 
		= 0\,.
\end{equation}
On the one hand, the convexity of $\Phi_n$  implies that 
\begin{equation}\label{pro2}
	\int_\Omega  \langle u_{\varepsilon}^\rho -U, D\Phi_n(u_\varepsilon^\rho) \rangle\ \mathrm{d}x \ge \int_\Omega [\Phi_n( u_{\varepsilon}^\rho) - \Phi_n(U) ]\ \mathrm{d}x =\mathcal{E}_n(u_\varepsilon^\rho) - \mathcal{E}_n(U)\,. 
\end{equation}
On the other hand, using the symmetry and the positive semidefiniteness of the matrix $ D^2\Phi_n(u_\varepsilon^\rho)$, see Lemma~\ref{lem.p2}, we have
\begin{align}
	\tau \sum_{i=1}^N\int_\Omega \langle M_{\varepsilon}^\rho(u_{\varepsilon}^\rho) \partial_i u_{\varepsilon}^\rho , \partial_i D\Phi_n(u_\varepsilon^\rho)\rangle\ \mathrm{d}x &=\tau \sum_{i=1}^N\int_\Omega \langle M_{\varepsilon }^\rho(u_{\varepsilon}^\rho) \partial_i u_{\varepsilon}^\rho ,   D^2\Phi_n(u_\varepsilon^\rho)\partial _i u_\varepsilon^\rho\rangle\ \mathrm{d}x \nonumber\\[1ex]
	&=\tau \sum_{i=1}^N\int_\Omega \langle D^2\Phi_n(u_\varepsilon^\rho) M_{\varepsilon }^\rho(u_{\varepsilon}^\rho) \partial_i u_{\varepsilon}^\rho ,   \partial _i u_\varepsilon^\rho\rangle\ \mathrm{d}x \nonumber\\[1ex]
	&=\tau\varepsilon \sum_{i=1}^N\int_\Omega \langle D^2\Phi_n(u_\varepsilon^\rho)  \partial_i u_{\varepsilon}^\rho ,   \partial _i u_\varepsilon^\rho\rangle\ \mathrm{d}x \nonumber\\[1ex]
	&\quad + \tau  \sum_{i=1}^N\int_\Omega \lambda_\varepsilon(u_\varepsilon^\rho)  \langle D^2\Phi_n(u_\varepsilon^\rho) M^\rho(u_\varepsilon^\rho) \partial_i u_{\varepsilon}^\rho ,   \partial _i u_\varepsilon^\rho\rangle\ \mathrm{d}x \nonumber\\[1ex]
	&\geq \tau \sum_{i=1}^N\int_\Omega	\lambda_\varepsilon(u_\varepsilon^\rho) \langle D^2\Phi_n(u_\varepsilon^\rho) M^\rho(u_\varepsilon^\rho) \partial_i u_{\varepsilon}^\rho ,   \partial _i u_\varepsilon^\rho\rangle\ \mathrm{d}x\,. \label{pro3}
\end{align}		 	
Since $S_n(u_\varepsilon^\rho) := D^2\Phi_n(u_\varepsilon^\rho) M(u_\varepsilon^\rho)$ is positive semidefinite by Lemma~\ref{lem.p3}, we further have
\begin{align}
		&\hspace{-1cm} \tau \sum_{i=1}^N\int_\Omega	 \lambda_\varepsilon(u_\varepsilon^\rho) \langle D^2\Phi_n(u_\varepsilon^\rho) M^\rho(u_\varepsilon^\rho) \partial_i u_{\varepsilon}^\rho ,   \partial _iu_\varepsilon^\rho\rangle\ \mathrm{d}x \nonumber	\\[1ex]
		& = \tau \sum_{i=1}^N\int_\Omega	 \lambda_\varepsilon(u_\varepsilon^\rho) \langle D^2\Phi_n(u_\varepsilon^\rho) M(u_\varepsilon^\rho) \partial_i u_{\varepsilon}^\rho ,   \partial _iu_\varepsilon^\rho\rangle\ \mathrm{d}x \nonumber \\[1ex]
		&\qquad + \tau \sum_{i=1}^N\int_\Omega	 \lambda_\varepsilon(u_\varepsilon^\rho) \langle D^2\Phi_n(u_\varepsilon^\rho)\big[ M^\rho(u_\varepsilon^\rho) - M(u_\varepsilon^\rho)\big] \partial_i u_{\varepsilon}^\rho ,   \partial _i u_\varepsilon^\rho\rangle\ \mathrm{d}x \nonumber \\[1ex]
		&\geq \tau \sum_{i=1}^N\int_\Omega	 \lambda_\varepsilon(u_\varepsilon^\rho) \langle D^2\Phi_n(u_\varepsilon^\rho) \big[ M^\rho(u_\varepsilon^\rho) - M(u_\varepsilon^\rho)\big] \partial_i u_{\varepsilon}^\rho , \partial _i u_\varepsilon^\rho\rangle\ \mathrm{d}x\,. \label{pro4}
\end{align}
Taking now advantage of the fact that $0 \le  u_{\varepsilon,j}^\rho\leq \rho$ a.e.  in $\Omega$ for $j=1,\, 2$ by~\eqref{ex12a'}, we further have
\begin{align*}
	&\hspace{-0.5cm}\Big| \tau \sum_{i=1}^N\int_\Omega	 \lambda_\varepsilon(u_\varepsilon^\rho) \langle D^2\Phi_n(u_\varepsilon^\rho) \big[ M^\rho(u_\varepsilon^\rho) - M(u_\varepsilon^\rho)\big] \partial_i u_{\varepsilon}^\rho , \partial _i u_\varepsilon^\rho\rangle\ \mathrm{d}x\Big| \nonumber\\[1ex]
	& \le 2\tau \max\{a,\,b,\,c,\,d\} \|D^2\Phi_n\|_{L_\infty((0,\rho)^2)} \sum_{j=1}^2 \int_\Omega \lambda_\varepsilon(u_\varepsilon^\rho)  |\alpha_\rho(u_{\varepsilon,j}^\rho) - u_{\varepsilon,j}^\rho|\, |\nabla u_\varepsilon^\rho|^2 \ \mathrm{d}x \nonumber\\[1ex]
	& \leq 4\tau \max\{a,\,b,\,c,\,d\}\kappa_n \rho^{n-2} \sum_{j=1}^2 \int_{\{\rho-1\leq u_{\varepsilon,j}^\rho\leq\rho\}} \frac{|\alpha_\rho(u_{\varepsilon,j}^\rho) - u_{\varepsilon,j}^\rho|}{1+\exp(\varepsilon u_{\varepsilon,j}^\rho)} |\nabla u_\varepsilon^\rho|^2 \ \mathrm{d}x\,,
\end{align*} 
where $\kappa_n\in\mathbb{R}$ is a positive constant such that
\begin{equation*}
|D^2\Phi_n(X)|\leq \kappa_n(X_1^{n-2}+X_2^{n-2})\qquad\text{for all $X\in[0,\infty)^2$.}
\end{equation*}
Owing to the definition of $\alpha_\rho$, we further obtain
\begin{align}
	&\hspace{-0.5cm}\Big| \tau \sum_{i=1}^N\int_\Omega	 \lambda_\varepsilon(u_\varepsilon^\rho) \langle D^2\Phi_n(u_\varepsilon^\rho) \big[ M^\rho(u_\varepsilon^\rho) - M(u_\varepsilon^\rho)\big] \partial_i u_{\varepsilon}^\rho , \partial _i u_\varepsilon^\rho\rangle\ \mathrm{d}x\Big| \nonumber\\[1ex]
	&\leq 4 \tau \max\{a,\,b,\,c,\,d\}\kappa_n \rho^{n-2} \sum_{j=1}^2 \int_{\{\rho-1\leq u_{\varepsilon,j}^\rho\leq\rho\}}	\frac{\rho}{1+e^{\varepsilon(\rho-1)}} |\nabla u_\varepsilon^\rho|^2\ \mathrm{d}x \nonumber\\[1ex]
	&\leq 8 e \tau \max\{a,\,b,\,c,\,d\}\kappa_n \rho^{n-1} e^{-\varepsilon\rho} \|\nabla u_\varepsilon^\rho\|_2^2\,. \label{pro5}
\end{align}
The desired estimate~\eqref{ex12c} is now a straightforward consequence of the relations \eqref{pro1}-\eqref{pro5}. 
\end{proof}

\subsection{A regularised system: $\rho\to\infty$}\label{sec2.3}

We next study the cluster points as $\rho\to\infty$ of the family~${\{u_{\varepsilon}^\rho\,:\,  \rho\geq \max\{1,\|F\|_\infty,\|G\|_\infty\}\}}$ provided in \Cref{lem.ex2}, the parameter $\varepsilon\in (0,1)$ being held fixed.

\begin{lemma}\label{lem.ex3}
Given $\tau>0$,  $U=(F,G)\in L_{\infty,+}(\Omega,\mathbb{R}^2)$, and $\varepsilon\in (0,1)$, there exist a sequence~$(\rho_l)_{l\ge 1}$   and	
a function~${u_{\varepsilon} = (u_{\varepsilon,1}, u_{\varepsilon,2}) \in H^1(\Omega,\mathbb{R}^2)\cap L_{\infty,+}(\Omega,\mathbb{R}^2)}$ such that  $\rho_l\to\infty$ and
\begin{align}
	u_\varepsilon^{\rho_l}&\to u_\varepsilon\qquad\text{in $L_p(\Omega,\mathbb{R}^2)$ for all $p\in [1,\infty)$ and pointwise a.e. in $\Omega$}\,,\label{conv1}\\[1ex]
	\nabla u_\varepsilon^{\rho_l}&\rightharpoonup \nabla u_\varepsilon\qquad \text{in $L_2(\Omega,\mathbb{R}^{2N})$}\,.\label{conv3}
\end{align}
Moreover, $u_\varepsilon$ solves the equation
\begin{equation}
	\int_\Omega \Big[ \langle u_{\varepsilon} , v \rangle + \tau \sum_{i=1}^N\langle M_{\varepsilon}(u_{\varepsilon}) \partial_i u_{\varepsilon} , \partial_i v \rangle \Big]\ \mathrm{d}x = \int_\Omega \langle U , v \rangle\ \mathrm{d}x\,, \qquad v\in H^1(\Omega,\mathbb{R}^2)\,, \label{ex12aa}
\end{equation}
where
\begin{equation*}
	M_{\varepsilon}(X)  = (m_{\varepsilon, jk}(X))_{1\le j,k\le 2}:= \varepsilon I_2 + \lambda_\varepsilon((X_{1,+},X_{2,+}))M(X),
\end{equation*}
with $M(X)$ defined in \eqref{x1}, and, for  each $n\ge 2$, we have
\begin{equation}
	\mathcal{E}_n(u_\varepsilon)\le \mathcal{E}_n(U)\,. \label{ex12ba}
\end{equation}
	Furthermore,
\begin{equation}
	\min\left\{ 1 , \frac{c}{d}\right\} \|u_{\varepsilon,1}+u_{\varepsilon,2}\|_\infty \le \max\left\{1,\frac{a}{b} \right\}\|F+G\|_\infty\,. \label{ex12bac}
	\end{equation}
\end{lemma}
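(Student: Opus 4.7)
My plan is a standard weak compactness argument in three stages, exploiting two favorable features of the construction in \Cref{lem.ex2}: the $H^1$-estimates~\eqref{ex12d}-\eqref{ex12e} are uniform in $\rho$ (since $\tau$ and $\varepsilon$ are fixed), and both the remainder in the Liapunov inequality~\eqref{ex12c} and the discrepancy $M_\varepsilon^\rho-M_\varepsilon$ carry an exponentially small factor in $\rho$ thanks to the cut-off $\lambda_\varepsilon$.

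\emph{Compactness.} Starting from the uniform $H^1$ bound on the family $(u_\varepsilon^\rho)_{\rho\ge\rho_0}$, with $\rho_0:=\max\{1,\|F\|_\infty,\|G\|_\infty\}$, I would invoke weak $H^1$ compactness, the Rellich-Kondrachov embedding, and a diagonal extraction to produce a sequence $\rho_l\to\infty$ and a limit $u_\varepsilon\in H^1(\Omega,\mathbb{R}^2)$ such that $u_\varepsilon^{\rho_l}\rightharpoonup u_\varepsilon$ in $H^1$ (hence~\eqref{conv3}), $u_\varepsilon^{\rho_l}\to u_\varepsilon$ strongly in $L_2(\Omega,\mathbb{R}^2)$, and $u_\varepsilon^{\rho_l}\to u_\varepsilon$ pointwise a.e.\ in $\Omega$; the non-negativity~\eqref{ex12a'} of each $u_\varepsilon^{\rho_l}$ is inherited in the a.e.\ limit.

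\emph{Liapunov inequality and $L_\infty$-bound.} For fixed $n\ge 2$, combining~\eqref{ex12c} with~\eqref{ex12e} gives $\mathcal{E}_n(u_\varepsilon^{\rho_l})\le\mathcal{E}_n(U)+o_l(1)$, and Fatou's lemma (valid because $\Phi_n\ge 0$ on $[0,\infty)^2$) applied to the pointwise convergence of $u_\varepsilon^{\rho_l}$ produces~\eqref{ex12ba}. Using the sandwich $c_n\|f+g\|_n^n\le\mathcal{E}_n(f,g)\le C_n\|f+g\|_n^n$ together with the sharp asymptotics $c_n^{1/n}\to\min\{1,c/d\}$ and $C_n^{1/n}\to\max\{1,a/b\}$ as $n\to\infty$ (to be established in~\Cref{secA}), taking $n$-th roots in $c_n\|u_{\varepsilon,1}+u_{\varepsilon,2}\|_n^n\le\mathcal{E}_n(u_\varepsilon)\le\mathcal{E}_n(U)\le C_n\|F+G\|_n^n$ and letting $n\to\infty$ yields~\eqref{ex12bac}; in particular $u_\varepsilon\in L_{\infty,+}(\Omega,\mathbb{R}^2)$. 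The uniform $L_q$-bounds ($q<\infty$) on the approximants that these estimates also produce, combined with the pointwise convergence, upgrade the $L_2$ convergence to the full~\eqref{conv1} by Vitali's theorem.

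\emph{Passing to the limit in the equation.} The linear term in~\eqref{ex12a} converges by strong $L_2$ convergence of $u_\varepsilon^{\rho_l}$. For the diffusion term, I would write
\[
M_\varepsilon^{\rho_l}(u_\varepsilon^{\rho_l})-M_\varepsilon(u_\varepsilon) = [M_\varepsilon^{\rho_l}-M_\varepsilon](u_\varepsilon^{\rho_l}) + [M_\varepsilon(u_\varepsilon^{\rho_l})-M_\varepsilon(u_\varepsilon)].
\]
Since $u_\varepsilon^{\rho_l}$ takes values in $[0,\rho_l]^2$ and $\alpha_{\rho_l}(z)=z$ on $[0,\rho_l-1]$, the first bracket is supported on $\{u_{\varepsilon,1}^{\rho_l}\vee u_{\varepsilon,2}^{\rho_l}\ge\rho_l-1\}$, where the cut-off $\lambda_\varepsilon$ bounds it uniformly by $C\rho_l/(1+e^{\varepsilon(\rho_l-1)})$, so $[M_\varepsilon^{\rho_l}-M_\varepsilon](u_\varepsilon^{\rho_l})\to 0$ in $L_\infty(\Omega)$. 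The matrix $M_\varepsilon$ is continuous on $\mathbb{R}^2$ and globally bounded on $[0,\infty)^2$ (the linear growth of $M$ being killed there by $\lambda_\varepsilon$), so the dominated convergence theorem applied to the pointwise a.e.\ convergence gives $M_\varepsilon(u_\varepsilon^{\rho_l})\to M_\varepsilon(u_\varepsilon)$ in every $L_p(\Omega)$, $p<\infty$. Combined with the weak $L_2$ convergence~\eqref{conv3} of $\nabla u_\varepsilon^{\rho_l}$, these two facts let me pass to the limit in the diffusion term and recover~\eqref{ex12aa}.

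The principal obstacle I anticipate is the extraction of the \emph{precise} $L_\infty$-constants $\min\{1,c/d\}$ and $\max\{1,a/b\}$ in~\eqref{ex12bac}, which hinges on the detailed $n\to\infty$ asymptotics of the coefficients $a_{j,n}$ defined in~\eqref{p4c}. This is a purely algebraic matter handled separately in~\Cref{secA}, and once its conclusions are granted, the remainder of the argument reduces to the standard compactness and passage-to-the-limit manipulations outlined above.
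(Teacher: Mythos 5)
Your proposal is correct and follows essentially the same route as the paper: uniform-in-$\rho$ $H^1$ bounds plus the exponentially small remainder in \eqref{ex12c} give compactness and the entropy inequalities, the two-sided bound on $\Phi_n$ from the appendix (Lemma~\ref{lelfb}, which in fact gives $c_n^{1/n}=\min\{1,c/d\}$ and $C_n^{1/n}=\max\{1,a/b\}$ exactly) yields \eqref{ex12bac} by letting $n\to\infty$, and the diffusion term passes to the limit by pairing the weakly convergent gradients with a strongly convergent matrix factor. The only cosmetic deviations are your use of Fatou's lemma (rather than convexity of $\Phi_n$) for the lower semicontinuity of $\mathcal{E}_n$ and your explicit splitting of $M_\varepsilon^{\rho_l}-M_\varepsilon$, where the paper instead applies dominated convergence directly to $\lambda_\varepsilon(u_\varepsilon^{\rho_l})M^{\rho_l}(u_\varepsilon^{\rho_l})^t\partial_i v$; both are valid.
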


\begin{proof}
Recalling \eqref{ex12d}-\eqref{ex12e}, we deduce that  $(u_\varepsilon^\rho)_\rho$ is bounded in $H^1(\Omega,\mathbb{R}^2)$. Moreover, since 
\begin{equation}
	\frac{\varepsilon^n z^n}{n!} \le e^{\varepsilon z}\,, \qquad z\in [0,\infty)\,, \quad n\ge 1\,, \label{x6}
\end{equation}
the estimates~\eqref{ex12e} and \eqref{ex12c}, along with \Cref{lelfb}, ensure that $(u_\varepsilon^\rho)_\rho$ is bounded in $L_n(\Omega,\mathbb{R}^2)$ for any integer~$n\geq 2$ (with an $\varepsilon$-dependent bound). We may then use a Cantor diagonal process, together with Rellich-Kondrachov' theorem and an interpolation argument, to deduce the convergence~\eqref{conv1} and~\eqref{conv3} along a sequence $\rho_l\to\infty$, as well as the componentwise non-negativity of~$u_\varepsilon$.
	
Since $\Phi_n$ is convex on $[0,\infty)^2$ for all $n\geq 2$, see Lemma~\ref{lem.p2}, it follows from \eqref{ex12e}, \eqref{ex12c}, \eqref{conv1}, and \eqref{x6} that~\eqref{ex12ba} holds true. Using once more \Cref{lelfb}, we infer from~\eqref{ex12ba} that
\begin{equation*}
	\|c u_{\varepsilon,1} + d u_{\varepsilon, 2}\|_n \le \frac{d}{b} \|a F + b G\|_n
\end{equation*}
for all $n\ge 2$. 
Passing to the limit $n\to\infty$ in the above inequality, we deduce that $u_\varepsilon\in L_\infty(\Omega,\mathbb{R}^2)$ satisfies~\eqref{ex12bac}. 

Let us now consider $v\in H^1(\Omega,\mathbb{R}^2)$. Since  \eqref{conv1} and \eqref{conv3} imply that  
\begin{equation*}
	\lim_{l\to\infty} \int_\Omega  \langle u_{\varepsilon}^{\rho_l} , v \rangle\ \mathrm{d}x = \int_\Omega  \langle u_{\varepsilon}, v \rangle\ \mathrm{d}x
	\qquad\text{and}\qquad \lim_{l\to\infty} \int_\Omega   \langle  \partial_i u_{\varepsilon}^{\rho_l} , \partial_i v \rangle \ \mathrm{d}x 
	\to\int_\Omega   \langle  \partial_i u_{\varepsilon} , \partial_i v \rangle \ \mathrm{d}x 
\end{equation*}
for $1\le i \le N$, the identity~\eqref{ex12aa} is satisfied provided that 
\begin{equation}
	\lim_{l\to\infty} \int_\Omega    \lambda_\varepsilon(u_{\varepsilon}^{\rho_l}) \langle M^{\rho_l}(u_{\varepsilon}^{\rho_l}) \partial_i u_{\varepsilon}^{\rho_l} , \partial_i v \rangle \ \mathrm{d}x = \int_\Omega   \lambda_\varepsilon(u_{\varepsilon}) \langle M(u_{\varepsilon}) \partial_i u_{\varepsilon} , \partial_i v \rangle\ \mathrm{d}x \label{ex12ac}
\end{equation}
for each $1\leq i\leq N$.
 To prove~\eqref{ex12ac}, we observe that, for $1\leq i\leq N$ and $j\in\{1,2\}$,
\begin{equation}
	\int_\Omega \lambda_\varepsilon(u_\varepsilon^{\rho_l}) \langle M^{\rho_l}(u_{\varepsilon}^{\rho_l}) \partial_i u_{\varepsilon}^{\rho_l} , \partial_i v \rangle \ \mathrm{d}x = \int_\Omega \lambda_\varepsilon(u_\varepsilon^{\rho_l}) \langle M^{\rho_l}(u_{\varepsilon}^{\rho_l})^t \partial_i v , \partial_i u_{\varepsilon}^{\rho_l} \rangle \ \mathrm{d}x \label{x7}
\end{equation}	
with
\begin{align*}
	\left| \lambda_\varepsilon(u_\varepsilon^{\rho_l}) \sum_{k=1}^2 m_{kj}^{\rho_l}(u_{\varepsilon}^{\rho_l}) \partial_i v_k \right| 
	& \le 2\max\{a,b,c,d\} \frac{u_{\varepsilon,1}^{\rho_l} + u_{\varepsilon,2}^{\rho_l}}{1+ \exp[\varepsilon (u_{\varepsilon,1}^{\rho_l} + u_{\varepsilon,2}^{\rho_l})]} |\partial_i v| \\
	& \hspace{3cm} \le \frac{2\max\{a,b,c,d\} }{\varepsilon} |\partial_i v| \qquad\text{a.e. in $\Omega$}\,,
\end{align*}
by \eqref{ex12bac} and \eqref{x6}, and
\begin{equation*}
	\lim_{l\to\infty} \lambda_\varepsilon(u_\varepsilon^{\rho_l}) \sum_{k=1}^2 m_{kj}^{\rho_l}(u_{\varepsilon}^{\rho_l}) \partial_i v_k 
	= \lambda_\varepsilon(u_\varepsilon) \sum_{k=1}^2 m_{ kj}(u_{\varepsilon}) \partial_i v_k \qquad\text{a.e. in $\Omega$}\,,
\end{equation*}
by~\eqref{momar}, the pointwise  almost everywhere convergence in $\Omega$ established in \eqref{conv1}, and the properties of $\alpha_{\rho_l}$. 
Lebesgue's dominated convergence theorem then guarantees that
\begin{equation*}
	\lim_{l\to\infty} \left\| \lambda_\varepsilon(u_\varepsilon^{\rho_l}) \sum_{k=1}^2 m_{kj}^{\rho_l}(u_{\varepsilon}^{\rho_l}) \partial_i v_k 
	- \lambda_\varepsilon(u_\varepsilon) \sum_{k=1}^2 m_{kj}(u_{\varepsilon}) \partial_i v_k \right\|_2 = 0\,.
\end{equation*}  
Combining the above convergence with~\eqref{conv3}, allows us to pass to the limit as $l\to\infty$ in~\eqref{x7} and find
\begin{align*}
	\lim_{l\to\infty} \int_\Omega \lambda_\varepsilon(u_\varepsilon^{\rho_l}) \langle M^{\rho_l}(u_{\varepsilon}^{\rho_l}) \partial_i u_{\varepsilon}^{\rho_l} , \partial_i v \rangle \ \mathrm{d}x & = \int_\Omega \lambda_\varepsilon(u_\varepsilon) \langle M(u_{\varepsilon})^t \partial_i v , \partial_i u_{\varepsilon} \rangle \ \mathrm{d}x \\
	& = \int_\Omega \lambda_\varepsilon(u_\varepsilon) \langle M(u_{\varepsilon}) \partial_i u_{\varepsilon} , \partial_i v \rangle \ \mathrm{d}x
\end{align*}
for $1\le i \le N$, which proves~\eqref{ex12ac}. We have thus shown that $u_\varepsilon$ solves~\eqref{ex12aa} and thereby completed the proof of \Cref{lem.ex3}.
\end{proof}

We next show that the entropy functional $\mathcal{E}_1$ evaluated at the function~$u_\varepsilon$ identified in \Cref{lem.ex3} is dominated by $\mathcal{E}_1(U)$ and that the associated dissipation term $\mathcal{E}_1(U)-\mathcal{E}_1(u_\varepsilon)$ provides a control on the gradient of $u_\varepsilon$ which is essential when considering the limit $\varepsilon\to 0$. 

\begin{lemma}\label{lem.ex4}
Let $\tau>0$, $U=(F,G)\in L_{\infty,+}(\Omega,\mathbb{R}^2)$,  and  $\varepsilon\in (0,1)$. 	
The function
\begin{equation*}
	u_{\varepsilon}=(u_{\varepsilon,1},u_{\varepsilon,2})\in H^1(\Omega,\mathbb{R}^2)\cap L_{\infty,+}(\Omega,\mathbb{R}^2)
\end{equation*} 
identified in \Cref{lem.ex3} satisfies 
\begin{equation*}
	\mathcal{E}_1(u_{\varepsilon}) + \frac{ \tau}{a} \int_\Omega \lambda_\varepsilon(u_{\varepsilon}) \big[  |\nabla  (a u_{\varepsilon,1} + \Theta_1  u_{\varepsilon,2})|^2 + \Theta_2 |\nabla u_{\varepsilon,2} |^2 \big]\ \mathrm{d}x \le \mathcal{E}_1(U)\,.
	\end{equation*}
\end{lemma}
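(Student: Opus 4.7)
\emph{Strategy.} I test the weak formulation \eqref{ex12aa} against a regularised gradient of $\Phi_1$, exploit convexity of the entropy, and identify the resulting dissipation by completion of squares. Since $L'(r)=\ln r$ is singular at the origin and $u_\varepsilon$ may vanish in $\Omega$, for $\delta\in(0,1)$ I introduce the regularised entropy
\begin{equation*}
\Phi_1^\delta(X):=L(X_1+\delta)+\frac{b^2}{ad}\,L(X_2+\delta)\,,
\end{equation*}
whose gradient is uniformly bounded in $\delta$ on the range of $u_\varepsilon$. As $u_\varepsilon\in H^1(\Omega,\mathbb R^2)\cap L_{\infty,+}(\Omega,\mathbb R^2)$, the composition $D\Phi_1^\delta(u_\varepsilon)$ belongs to $H^1(\Omega,\mathbb R^2)\cap L_\infty(\Omega,\mathbb R^2)$ and is an admissible test function in~\eqref{ex12aa}.

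\emph{Dissipation identity.} Plugging $v=D\Phi_1^\delta(u_\varepsilon)$ into \eqref{ex12aa}, applying the chain rule $\partial_i v=D^2\Phi_1^\delta(u_\varepsilon)\partial_i u_\varepsilon$, and invoking the pointwise convexity estimate $\Phi_1^\delta(u_\varepsilon)-\Phi_1^\delta(U)\le\langle u_\varepsilon-U,D\Phi_1^\delta(u_\varepsilon)\rangle$, I obtain
\begin{equation*}
\int_\Omega \Phi_1^\delta(u_\varepsilon)\,\mathrm{d}x + \tau\sum_{i=1}^N\int_\Omega \bigl\langle M_\varepsilon(u_\varepsilon)\partial_i u_\varepsilon,\, D^2\Phi_1^\delta(u_\varepsilon)\partial_i u_\varepsilon\bigr\rangle\,\mathrm{d}x \le \int_\Omega \Phi_1^\delta(U)\,\mathrm{d}x\,.
\end{equation*}
Since $D^2\Phi_1^\delta$ is diagonal with non-negative entries, the $\varepsilon I_2$-contribution from $M_\varepsilon=\varepsilon I_2+\lambda_\varepsilon M$ produces a non-negative term which I discard. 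A direct computation of $D^2\Phi_1(X)M(X)$ followed by completion of squares, combined with the definitions of $\Theta_1$ and $\Theta_2$ in~\eqref{constants}, yields on $\{u_{\varepsilon,1}>0\}\cap\{u_{\varepsilon,2}>0\}$ the pointwise identification
\begin{equation*}
\bigl\langle D^2\Phi_1(X)M(X)\xi,\xi\bigr\rangle = \frac{1}{a}\Bigl[(a\xi_1+\Theta_1\xi_2)^2+\Theta_2\xi_2^2\Bigr]\,.
\end{equation*}

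\emph{Passage to the limit and main obstacle.} By continuity of $L$ together with the uniform $L_\infty$-bounds on $u_\varepsilon$ and $U$, dominated convergence yields $\int_\Omega\Phi_1^\delta(u_\varepsilon)\,\mathrm{d}x\to\mathcal E_1(u_\varepsilon)$ and $\int_\Omega\Phi_1^\delta(U)\,\mathrm{d}x\to\mathcal E_1(U)$ as $\delta\to 0$. The delicate step is the passage to the limit in the dissipation integrand, where $L''(u_{\varepsilon,j}+\delta)=1/(u_{\varepsilon,j}+\delta)$ blows up on the level set $\{u_{\varepsilon,j}=0\}$. This is resolved by two observations: first, $M(u_\varepsilon)$ carries a compensating factor $u_{\varepsilon,j}$, so the product $D^2\Phi_1^\delta(u_\varepsilon)M(u_\varepsilon)$ stays bounded uniformly in $\delta$ and in fact converges pointwise a.e.\ to $D^2\Phi_1(u_\varepsilon)M(u_\varepsilon)$ on $\{u_{\varepsilon,1}>0\}\cap\{u_{\varepsilon,2}>0\}$; second, Stampacchia's theorem gives $\partial_i u_{\varepsilon,j}=0$ a.e.\ on $\{u_{\varepsilon,j}=0\}$, so that the dissipation integrand vanishes a.e.\ on this set. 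The integrand is thus non-negative and converges pointwise a.e.\ to $\lambda_\varepsilon(u_\varepsilon)\cdot\tfrac{1}{a}\bigl[(a\partial_i u_{\varepsilon,1}+\Theta_1\partial_i u_{\varepsilon,2})^2+\Theta_2(\partial_i u_{\varepsilon,2})^2\bigr]$, and Fatou's lemma delivers the claimed inequality upon letting $\delta\to 0$.
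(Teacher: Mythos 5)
Your strategy coincides with the paper's: you test \eqref{ex12aa} with the gradient of the shifted entropy $\bigl(\ln(u_{\varepsilon,1}+\delta),\tfrac{b^2}{ad}\ln(u_{\varepsilon,2}+\delta)\bigr)$, use the convexity of $L$ for the zeroth-order terms, discard the non-negative $\varepsilon I_2$ contribution, identify the limiting dissipation by completing the square with the constants $\Theta_1,\Theta_2$ from~\eqref{constants}, and remove the regularisation with the help of $\nabla u_{\varepsilon,j}=0$ a.e.\ on $\{u_{\varepsilon,j}=0\}$. Up to and including the limit $\delta\to0$ in the two entropy terms, this is exactly the paper's argument.

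The final limiting step, however, contains a genuine gap: you invoke Fatou's lemma after asserting that the $\delta$-regularised dissipation integrand is non-negative, and that assertion is not justified and is false in general. Writing $s:=u_{\varepsilon,1}/(u_{\varepsilon,1}+\delta)$ and $t:=u_{\varepsilon,2}/(u_{\varepsilon,2}+\delta)$, the relevant quadratic form (after discarding the $\varepsilon$-part and the positive factor $\lambda_\varepsilon$) is
\begin{equation*}
a s\,\xi_1^2+\Bigl(b s+\frac{b^2c}{ad}\,t\Bigr)\xi_1\xi_2+\frac{b^2}{a}\,t\,\xi_2^2\,,
\end{equation*}
which is positive semidefinite only if $\bigl(s+\tfrac{bc}{ad}\,t\bigr)^2\le 4st$; this holds when $s=t$ (i.e.\ in the unregularised limit, thanks to $ad>bc$), but fails whenever $s$ and $t$ are sufficiently unbalanced, e.g.\ at points where $0<u_{\varepsilon,2}\ll\delta\ll u_{\varepsilon,1}$ — precisely the regime the regularisation is introduced to handle. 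Since you have no control on the direction of $(\partial_i u_{\varepsilon,1},\partial_i u_{\varepsilon,2})$ at such points, the integrand may be negative there, so plain Fatou does not apply. (Note also that on $\{u_{\varepsilon,1}=0\}\cup\{u_{\varepsilon,2}=0\}$ the integrand does not vanish, it merely reduces to a single non-negative term; the problem sits on $\{u_{\varepsilon,1}>0\}\cap\{u_{\varepsilon,2}>0\}$.) The gap is repairable in two ways: either observe that the integrand is bounded below, uniformly in $\delta$, by $-C\,|\nabla u_{\varepsilon,1}|\,|\nabla u_{\varepsilon,2}|\in L_1(\Omega)$ and apply Fatou's lemma with an integrable lower bound; or, as the paper does, split $u_{\varepsilon,j}/(u_{\varepsilon,j}+\delta)=1-\delta/(u_{\varepsilon,j}+\delta)$, so that the dissipation equals the exact, $\delta$-independent completed-square term minus error terms carrying the factor $\tfrac{\delta}{u_{\varepsilon,j}+\delta}\nabla u_{\varepsilon,j}$, which tend to $0$ by dominated convergence since $\nabla u_{\varepsilon,j}=0$ a.e.\ on $\{u_{\varepsilon,j}=0\}$.
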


\begin{proof}
Let $\eta\in (0,1)$. Then $\left( \ln{(u_{\varepsilon,1}+\eta)}, (b^2/ad)\ln{(u_{\varepsilon,2}+\eta) } \right)\in H^1(\Omega,\mathbb{R}^2)$ and we infer from~\eqref{ex12aa} that
\begin{equation}\label{ex19}
	0  = \int_\Omega \Big[ (u_{\varepsilon,1} - U_1) \ln{(u_{\varepsilon,1}+\eta)} + \frac{b^2}{ad} (u_{\varepsilon,2} - U_2)\ln{(u_{\varepsilon,2}+\eta)} \Big]\ \mathrm{d}x +D(\eta)\,,
\end{equation}
where
\begin{align*}
	D(\eta)&:= \tau \int_\Omega \sum_{i=1}^N\big( m_{\varepsilon,11}(u_{\varepsilon}) \partial_i u_{\varepsilon,1} + m_{\varepsilon,12}(u_{\varepsilon}) \partial_i u_{\varepsilon,2} \big)
		\frac{\partial_i u_{\varepsilon,1}}{u_{\varepsilon,1}+\eta}\ \mathrm{d}x \\[1ex]
	& \qquad + \frac{\tau b^2}{ad} \int_\Omega \sum_{i=1}^N\big( m_{\varepsilon,21}(u_{\varepsilon}) \partial_i u_{\varepsilon,1} + m_{\varepsilon,22}(u_{\varepsilon}) \partial_i u_{\varepsilon,2} \big) 
	\frac{\partial_i u_{\varepsilon,2}}{u_{\varepsilon,2}+\eta}\ \mathrm{d}x \,.
\end{align*}
Since $L(r)=r\ln r-r+1$ is convex on $[0,\infty)$ with $L'(r)=\ln{r}$, the first term on the right-hand side of~\eqref{ex19} can be estimated as follows   
\begin{align*}
	& \int_\Omega \Big[ (u_{\varepsilon,1} - U_1) \ln{(u_{\varepsilon,1}+\eta)} + \frac{b^2}{ad} (u_{\varepsilon,2} - U_2)\ln{(u_{\varepsilon,2}+\eta)} \Big]\ \mathrm{d}x \\[1ex]
	& \qquad \ge \int_\Omega \Big[ (L(u_{\varepsilon,1}+\eta) - L(U_1+\eta)) + \frac{b^2}{ad}(L(u_{\varepsilon,2}+\eta) - L(U_2+\eta)) \Big]\ \mathrm{d}x \\[1ex]
	& \qquad = \mathcal{E}_1((u_{\varepsilon,1}+\eta,u_{\varepsilon,2}+\eta))- \mathcal{E}_1((U_{1}+\eta,U_{2}+\eta))\,.
\end{align*}
Using the continuity of  $\Phi_1$ and the boundedness of $u_\varepsilon$, see~\eqref{ex12bac}, we deduce that
\begin{equation}
	\liminf_{\eta\to 0} \int_\Omega \left[ (u_{\varepsilon,1} - U_1) \ln{(u_{\varepsilon,1}+\eta)} + \frac{b^2}{ad} (u_{\varepsilon,2} - U_2)\ln{(u_{\varepsilon,2}+\eta)} \right]\ \mathrm{d}x  \ge \mathcal{E}_1(u_{ \varepsilon} )- \mathcal{E}_1(U)\,. \label{ex20}
\end{equation}
Next, recalling the definition of the matrix  $M_\varepsilon$, see \Cref{lem.ex3}, we have
\begin{align*}
	D(\eta) & = \tau\varepsilon\int_\Omega \left( \frac{|\nabla u_{\varepsilon,1}|^2}{u_{\varepsilon,1}+\eta} +\frac{b^2}{ad} \frac{|\nabla u_{\varepsilon,2}|^2}{u_{\varepsilon,2}+\eta} \right)\ \mathrm{d}x  \\
	& \qquad + \frac{\tau}{a} \int_\Omega\lambda_\varepsilon(u_{\varepsilon}) \big[  |\nabla(a u_{\varepsilon,1}+\Theta_1  u_{\varepsilon,2})|^2 +  \Theta_2|\nabla u_{\varepsilon,2} |^2 \big]\ \mathrm{d}x\\
	& \qquad - J_1(\eta) - J_2(\eta)\,,
\end{align*}
where
\begin{align*}
	J_1(\eta) & := \tau   \int_\Omega  \frac{\eta\lambda_\varepsilon(u_{\varepsilon}) }{u_{\varepsilon,1}+\eta}  \nabla u_{\varepsilon,1} \cdot \nabla(a u_{\varepsilon,1} + b u_{\varepsilon,2})  \ \mathrm{d}x\,, \\
	J_2(\eta) & := \frac{\tau b^2}{ad} \int_\Omega \frac{\eta\lambda_\varepsilon(u_{\varepsilon}) }{u_{\varepsilon,2}+\eta}  \nabla u_{\varepsilon,2} \cdot \nabla(c u_{\varepsilon,1} + d u_{\varepsilon,2})  \ \mathrm{d}x\,.
\end{align*}
Since $u_{\varepsilon}\in H^1(\Omega,\mathbb{R}^2)$  and $\nabla u_{\varepsilon,j}=0$ a.e. on the level set $\{x\in\Omega\ : \ u_{\varepsilon,j}=0\}$ for $j\in\{1,2\}$,   we have
\begin{align*}
	& \lim_{\eta\to 0} \frac{\eta\lambda_\varepsilon(u_{\varepsilon}) }{u_{\varepsilon,j}+\eta} \nabla u_{\varepsilon,j}=0\qquad \text{ a.e. in $\Omega$}\,, \\
	& \left| \frac{\eta\lambda_\varepsilon(u_{\varepsilon}) }{u_{\varepsilon,j}+\eta} \nabla u_{\varepsilon,j} \right|
	 \leq |\nabla u_{\varepsilon,j}|\qquad \text{ a.e. in $\Omega$}\,.
\end{align*}
Lebesgue's dominated convergence theorem ensures now that
\begin{equation*}
	\lim_{\eta\to 0} \left( J_1(\eta) + J_2(\eta) \right) = 0\,. 
\end{equation*}
This shows that
\begin{equation}
	\liminf_{\eta\to 0} D(\eta) \ge \frac{\tau}{a} \int_\Omega\lambda_\varepsilon(u_{\varepsilon}) \big[  |\nabla  (a u_{\varepsilon,1}+\Theta_1 u_{\varepsilon,2})|^2 +  \Theta_2|\nabla u_{\varepsilon,2} |^2 \big]\ \mathrm{d}x\,. \label{ex23}
\end{equation}
Passing to the limit $\eta\to 0$ in \eqref{ex19}, we get the  desired estimate in view of \eqref{ex20} and \eqref{ex23}.
\end{proof}

\subsection{A regularised system: $\varepsilon\to 0$}\label{sec2.4}

We complete this section with the proof of \Cref{P:1}.

\begin{proof}[Proof of \Cref{P:1}]
Consider $\tau>0$ and $U=(F,G)\in L_{\infty,+}(\Omega,\mathbb{R}^2)$. Given $\varepsilon\in (0,1)$, let 
\begin{equation*}
	u_\varepsilon =(u_{\varepsilon,1},u_{\varepsilon,2}) \in H^1(\Omega,\mathbb{R}^2)\cap  L_{\infty,+}(\Omega,\mathbb{R}^2)
\end{equation*} 
denote the weak solution to~\eqref{ex12aa} provided by \Cref{lem.ex3}. According to~\eqref{ex12bac}, 
\begin{equation}
	\max\{\|u_{\varepsilon,1}\|_\infty , \| u_{\varepsilon,2}\|_\infty \} \le\|u_{\varepsilon,1}+ u_{\varepsilon,2}\|_\infty  \le R_0 := \frac{d}{b} \frac{\max\{a,b\}}{\min\{c,d\}} \|F+G\|_\infty\,. \label{x8}
\end{equation}
Hence,
\begin{equation*}
	\lambda_\varepsilon(u_\varepsilon) \ge \frac{2}{1+e^{R_0}}\,,
\end{equation*}
a lower bound which, together with \Cref{lem.ex4} and the non-negativity of $\mathcal{E}_1$, ensures that
\begin{equation}
	\text{$(\nabla u_{\varepsilon})_{\varepsilon \in (0,1)}$ is bounded in~$L_2(\Omega,\mathbb{R}^{2N})$.} \label{x9}  
\end{equation}
We now infer from~\eqref{x8}, \eqref{x9}, Rellich-Kondrachov' theorem, 
an interpolation argument, and a Cantor diagonal process that there exist a function 
\begin{equation*}
	u=(f,g)\in H^1(\Omega,\mathbb{R}^2)\cap L_{\infty,+}(\Omega,\mathbb{R}^2)
\end{equation*} 
and a sequence $(\varepsilon_l)_{l\ge 1}$, with $\varepsilon_l\to 0$, such that
\begin{align}
	u_{\varepsilon_l}&\to u\qquad\text{in $L_p(\Omega,\mathbb{R}^2)$ for all $p\in [1,\infty)$}\,,\label{conv1'}\\[1ex]
	u_{\varepsilon_l}&\overset{*}\rightharpoonup u\qquad  \text{in $L_\infty(\Omega,\mathbb{R}^2)$}\,,\label{conv2'}\\[1ex]
	\nabla u_{\varepsilon_l}&\rightharpoonup \nabla u\qquad \text{in $L_2(\Omega,\mathbb{R}^{2N})$}\,.\label{conv3'}
\end{align}
An immediate consequence of~\eqref{ex12ba} and \eqref{conv1'} is the estimate~\eqref{ex2}.
 Since $\sqrt{\lambda_{\varepsilon_l}(u_{\varepsilon_l})}\to 1$ in~${L_\infty(\Omega)}$ by~\eqref{UL} and~\eqref{x8}, we conclude together with~\eqref{conv3'} that
\begin{align*}
	\sqrt{\lambda_{\varepsilon_l}(u_{\varepsilon_l})} \nabla \big(a u_{\varepsilon_l,1} + \Theta_1 u_{\varepsilon_l,2} \big) & 
	\rightharpoonup \nabla\big (a u_1 + \Theta_1 u_2\big) \qquad \text{in $L_2(\Omega,\mathbb{R}^{N})$}\,, \\[1ex]
	\sqrt{\Theta_2 \lambda_{\varepsilon_l}(u_{\varepsilon_l})} \nabla u_{\varepsilon_l,2} & \rightharpoonup \sqrt{\Theta_2}\nabla u_2 \qquad \text{in $L_2(\Omega,\mathbb{R}^{N}) $}\,.
\end{align*}
Moreover, the $L_\infty$-bound~\eqref{x8} and the convergence~\eqref{conv1'} imply that
\begin{equation*}
	\liminf_{l\to\infty} \mathcal{E}_1(u_{\varepsilon_l}) \geq \mathcal{E}_1(u)\,,
\end{equation*}
and the estimate~\eqref{ex2b} is now obtained by passing to $\liminf$ in the inequality reported in \Cref{lem.ex4} (with $\varepsilon$ replaced by $\varepsilon_l$).

Finally,~\eqref{conv1'}, along with~\eqref{x8} and the convergence property
\begin{equation*}
	\lim_{\varepsilon\to 0} \big| m_{\varepsilon,jk}(X) - m_{jk}(X) \big| = 0\,,
\end{equation*}
which is uniform with respect to $X\in [0,R_0]^2 $ and $1 \le j,k\le 2$, enables us  to use Lebesgue's dominated convergence theorem  to show that, for $v=(\varphi,\psi)\in H^1(\Omega,\mathbb{R}^2)$,
\begin{equation*}
	\lim_{l\to\infty} \big\| M_{\varepsilon_l}(u_{\varepsilon_l})^t \partial_i v - M(u)^t \partial_i v \big\|_2 = 0\,, \qquad 1\le i \le N\,.
\end{equation*}
Together with~\eqref{conv1'} and~\eqref{conv3'}, the above convergence allows us to let $\varepsilon_l\to 0$ in~\eqref{ex12aa} and conclude that  $u=(f,g)$ satisfies~\eqref{ex1}. This completes the proof of \Cref{P:1}.
\end{proof}

\section{Existence of bounded weak solutions}\label{sec3}

This section is devoted to the proof of \Cref{ThBWS}, which relies on rather classical arguments, besides the estimates derived in \Cref{P:1}, and proceeds along the lines of the proof of \cite[Theorem~1.2]{LM2021b}. As a first step, we use \Cref{P:1} to construct a family of piecewise constant functions $(u^\tau)_{\tau\in(0,1)}$ starting from the initial condition  $(f^{in},g^{in})\in L_{\infty,+}(\Omega,\mathbb{R}^2)$. More precisely, for~${\tau\in (0,1)}$, we set $u^\tau(0):=u_0^\tau $ and
\begin{equation}\label{x01}
	u^\tau(t)= u^\tau_{l}\,, \qquad t\in ((l-1)\tau, l\tau]\,, \qquad   l\in\mathbb{N}\setminus\{0\}\,,
\end{equation}
where the sequence $(u_{l}^\tau)_{l\geq 0}$ is defined as follows:
\begin{equation}
	\begin{split}
		&u_0^\tau = u^{in} := (f^{in},g^{in})\in L_{\infty,+}(\Omega,\mathbb{R}^2)\,, \\
		&u_{l+1}^\tau =(f_{l+1}^\tau,g_{l+1}^\tau)\in H^1(\Omega,\mathbb{R}^2)\cap L_{\infty,+}(\Omega,\mathbb{R}^2) \;\text{is the solution to \eqref{ex1}} \\
		&\text{with $ U=u_l^\tau=(f_l^\tau,g_l^\tau)$ constructed in \Cref{P:1} for $  l\ge 0$}\,.
	\end{split}\label{x02}
\end{equation}
In order to establish \Cref{ThBWS}, we show that the family $(u^\tau)_{\tau\in(0,1)}$  defined in~\eqref{x02} converges along a subsequence~${\tau_j\to0}$ towards a pair $u=(f,g)$ which fulfills all the requirements of \Cref{ThBWS}. 

\medskip

Below, $C$ and $(C_l)_{l\ge 0}$ denote various positive constants depending only on~${(a,\, b,\, c,\, d)}$ and $u^{in}$. Dependence upon additional parameters will be indicated explicitly.

\begin{proof}[Proof of \Cref{ThBWS}]
	Let $\tau\in (0,1)$ and let $u^\tau$ be defined in \eqref{x01}-\eqref{x02}.  Given   $l\ge 0$, we infer from \Cref{P:1} that
	\begin{subequations}\label{x03}
		\begin{align}
			\int_\Omega \Big( f_{l+1}^\tau \varphi + \tau f_{l+1}^\tau \nabla[ af_{l+1}^\tau + b g_{l+1}^\tau ] \cdot\nabla \varphi \Big)\ \mathrm{d}x & = \int_\Omega f_{l}^\tau \varphi\ \mathrm{d}x\,, \qquad \varphi\in H^1(\Omega)\,, \label{x03a} \\
			\int_\Omega \Big( g_{l+1}^\tau \psi + \tau   g_{l+1}^\tau \nabla[cf_{l+1}^\tau + dg_{l+1}^\tau] \cdot\nabla\psi \Big)\ \mathrm{d}x & = \int_\Omega g_{l}^\tau \psi\ \mathrm{d}x\,, \qquad \psi\in H^1(\Omega)\,. \label{x03b}
		\end{align}
	\end{subequations}
	Moreover,
	\begin{equation}
		\mathcal{E}_n(u_{l+1}^\tau) \le \mathcal{E}_n(u_{l}^\tau)\qquad\text{for $n\ge 2,$} \label{x04}
	\end{equation}
	 and we also have
	\begin{equation}
		\mathcal{E}_1(u_{l+1}^\tau) 
		+\frac{ \tau}{a} \int_\Omega \big[  |\nabla (a f_{l+1}^\tau + \Theta_1g_{l+1}^\tau)|^2+\Theta_2|\nabla g_{l+1}^\tau|^2\big]\ \mathrm{d}x \le\mathcal{E}_1(u_{l}^\tau)\,. \label{x05}
	\end{equation}
	It readily follows from \eqref{x01}, \eqref{x02}, \eqref{x04}, and \eqref{x05} that, for $t>0$,
	\begin{equation}
		\mathcal{E}_n(u^\tau(t)) \le \mathcal{E}_n (u^{in}) \,, \qquad \ n\ge 2\,, \label{x06}
	\end{equation}
and
	\begin{equation}
		\mathcal{E}_1(u^\tau(t))  +\frac{ 1}{a} \ \int_0^t \int_\Omega \big[  |\nabla (a f^\tau + \Theta_1g^\tau)|^2+\Theta_2|\nabla g^\tau|^2\big]\ \mathrm{d}x\mathrm{d}s \le 	\mathcal{E}_1(u^{in})\,. \label{x07}
	\end{equation}
	
	An immediate consequence of \eqref{x06} and Lemma~\ref{lelfb} is the estimate 
	\begin{equation*}
		\|f^\tau(t)+g^\tau(t)\|_n \le \frac{d}{b}\frac{\max\{a,\,b\}}{\min\{c,\, d\}}\|f^{in}+g^{in}\|_n\,, \qquad n\ge 2\,, \ t>0\,.
	\end{equation*}
	Letting $n\to\infty$ in the above inequality gives
	\begin{equation}
		\|f^\tau(t)+g^\tau(t)\|_\infty \le C_1 := \frac{d}{b}\frac{\max\{a,\,b\}}{\min\{c,\, d\}} \|f^{in}+g^{in}\|_\infty\,, \qquad t>0\,. \label{x08}
	\end{equation}
	Also, taking advantage of  the non-negativity of $\mathcal{E}_1$, we deduce from   \eqref{x07} that
	\begin{equation}
		\int_0^t \big[ \|\nabla f^\tau(s)\|_2^2 + \|\nabla g^\tau(s)\|_2^2 \big]\ \mathrm{d}s\le C_2 := \frac{a^2+2(\Theta_2+\Theta_1^2)}{a\Theta_2} \mathcal{E}_1(u^{in})\,, \qquad t>0\,. \label{x09}
	\end{equation}
	
	Next, for $l\ge 1$ and $t\in ((l-1)\tau,l\tau]$, we deduce from \eqref{x03a}, \eqref{x08}, and H\"older's inequality that, for $\varphi\in H^1(\Omega)$, 
	\begin{align*}
		\left| \int_\Omega \left( f^{\tau}(t+\tau) - f^\tau(t) \right) \varphi\ \mathrm{d}x \right| & = 
		\left| \int_{l\tau}^{(l+1) \tau} \int_\Omega f_{l+1}^\tau \nabla[ af_{l+1}^\tau + b g_{l+1}^\tau ] \cdot\nabla \varphi\ \mathrm{d}x\mathrm{d}s \right| \\
		& \le \int_{l\tau}^{(l+1) \tau} \| f^\tau(s)\|_\infty \|\nabla[ af^\tau(s) + b g^\tau(s) ]\|_2 \|\nabla\varphi\|_2\ \mathrm{d}s \\
		& \le C_1 \|\nabla\varphi\|_2 \int_{l\tau}^{(l+1) \tau} \|\nabla[ af^\tau(s) + b g^\tau(s) ]\|_2\ \mathrm{d}s \,.
	\end{align*}
	A duality argument then gives
	\begin{equation*}
		\| f^{\tau}(t+\tau) - f^\tau(t) \|_{(H^1)'} \le  C_1 \int_{l\tau}^{(l+1) \tau} \|\nabla[ af^\tau(s) + b g^\tau(s) ]\|_2\ \mathrm{d}s
	\end{equation*}
	for $t\in ((l-1)\tau,l\tau]$ and $l\ge 1$. Now, for $l_0\ge 2$ and $T\in ((l_0-1)\tau,l_0\tau]$, the above inequality, along with H\"older's inequality, entails that
	\begin{align*}
		\int_0^{T-\tau} \| f^{\tau}(t+\tau) - f^\tau(t) \|_{(H^1)'}^2\ \mathrm{d}t & \le \int_0^{(l_0-1)\tau} \| f^{\tau}(t+\tau) - f^\tau(t) \|_{(H^1)'}^2\ \mathrm{d}t \\
		&  =\sum_{l=1}^{l_0-1} \int_{(l-1)\tau}^{l\tau} \| f^{\tau}(t+\tau) - f^\tau(t) \|_{(H^1)'}^2\ \mathrm{d}t \\
		& \le C_1^2 \tau \sum_{l=1}^{l_0-1} \left( \int_{l\tau}^{(l+1) \tau} \|\nabla[ af^\tau(s) + b g^\tau(s) ]\|_2\ \mathrm{d}s \right)^2\\
		& \le C_1^2 \tau^2 \sum_{l=1}^{l_0-1} \int_{l\tau}^{(l+1) \tau} \|\nabla[ af^\tau(s) + b g^\tau(s) ]\|_2^2\ \mathrm{d}s \\
		& \le C_1^2 \tau^2 \int_{0}^{l_0 \tau} \|\nabla[ af^\tau(s) + b g^\tau(s) ]\|_2^2\ \mathrm{d}s\,.
	\end{align*}
	We then use \eqref{x09} (with $t=l_0\tau$) and Young's inequality to obtain
	\begin{align}
		\int_0^{T-\tau} \| f^{\tau}(t+\tau) - f^\tau(t) \|_{(H^1)'}^2\ \mathrm{d}t & \le C_1^2 \tau^2 
		\int_{0}^{l_0\tau} \left( 2 a^2 \|\nabla f^\tau(s)\|_2^2 + 2b^2 \|\nabla g^\tau(s)\|_2^2 \right)\ \mathrm{d}s \nonumber \\
		& \le C_3 \tau^2 \,, \label{x10}
	\end{align}
	with $C_3 := 2 (a^2+b^2)^2 C_1^2 C_2$. Similarly, 
	\begin{equation}
		\int_0^{T-\tau} \| g^{\tau}(t+\tau) - g^\tau(t) \|_{(H^1)'}^2\ \mathrm{d}t \le C_4 \tau^2 \,, \label{x11}
	\end{equation}
	with $C_4 := 2 (c^2+d^2) C_1^2 C_2$. 
	
	According to Rellich-Kondrachov' theorem, 
	$H^1(\Omega,\mathbb{R}^2)$ is compactly embedded in $L_2(\Omega,\mathbb{R}^2)$, while~${L_2(\Omega,\mathbb{R}^2)}$ is continuously (and compactly) 
	embedded in~$H^1(\Omega,\mathbb{R}^2)'$.
	Gathering \eqref{x08}-\eqref{x11}, we infer from \cite[Theorem~1]{DJ2012} that, for any~${T>0}$,
	\begin{equation}
		(u^\tau)_{\tau\in (0,1)} \;\text{ is relatively compact in }\; L_2((0,T)\times\Omega,\mathbb{R}^2)\,. \label{x12}
	\end{equation}
	Owing to \eqref{x08}, \eqref{x09}, and \eqref{x12}, we may use a Cantor diagonal argument to find a function
	\begin{equation*}
		u=(f,g)\in L_{\infty,+}((0,\infty)\times\Omega,\mathbb{R}^2)
	\end{equation*} 
and a sequence $(\tau_m)_{m\ge 1}$, $\tau_m\to 0$, such that, for any $T>0$ and $p\in [1,\infty)$,
	\begin{equation}
		\begin{split}
			u^{\tau_m} & \longrightarrow u \;\;\text{ in }\;\; L_p((0,T)\times\Omega,\mathbb{R}^2)\,, \\
			u^{\tau_m} & \stackrel{*}{\rightharpoonup} u \;\;\text{ in }\;\; L_\infty((0,T)\times\Omega,\mathbb{R}^2)\,, \\
			u^{\tau_m} & \rightharpoonup u \;\;\text{ in }\;\; L_2((0,T),H^1(\Omega,\mathbb{R}^2))\,.
		\end{split} \label{x13}
	\end{equation}
	In addition, the compact embedding of $L_2(\Omega,\mathbb{R}^2)$ in $H^1(\Omega,\mathbb{R}^2)'$, along with \eqref{x06} with $n=2$, \eqref{x10}, and \eqref{x11}, allows us to apply once more \cite[Theorem~1]{DJ2012} to conclude that 
	\begin{equation}
		u\in C([0,\infty),H^1(\Omega,\mathbb{R}^2)')\,. \label{x14}
	\end{equation}
	
	Let us now identify the equations solved by the components $f$ and $g$ of $u$. 
	To this end, let~${\chi\in W^1_\infty([0,\infty))}$ be a compactly supported function and $\varphi\in C^1(\overline{\Omega})$. In view of  \eqref{x03a}, classical computations give 
	\begin{align*}
		& \int_0^\infty \int_\Omega \frac{\chi(t+\tau)-\chi(t)}{\tau} f^\tau(t) \varphi\ \mathrm{d}x\mathrm{d}t + \left( \frac{1}{\tau} \int_0^\tau \chi(t)\ \mathrm{d}t \right) \int_\Omega f^{in}\varphi\ \mathrm{d}x 	\\
		& \qquad = \int_0^\infty \int_\Omega \chi(t) f^\tau(t) \nabla[ af^\tau(t) + b g^\tau(t)] \cdot\nabla\varphi\ \mathrm{d}x\mathrm{d}t\,.
	\end{align*}
	Taking $\tau=\tau_m$ in the above identity, it readily follows from \eqref{x13} and the regularity of $\chi$ and $\varphi$ that we may pass to the limit as $m\to\infty$ and conclude that
	\begin{equation}
		\begin{split}
		& \int_0^\infty \int_\Omega \frac{d\chi}{dt}(t) f(t,x) \varphi(x)\ \mathrm{d}x\mathrm{d}t + \chi(0) \int_\Omega f^{in}(x)\varphi(x)\ \mathrm{d}x \\
		& \qquad = \int_0^\infty \int_\Omega \chi(t) f(t,x) \nabla\left[ af+ b g\right](t,x) \cdot\nabla \varphi(x)\ \mathrm{d}x\mathrm{d}t\,.
		\end{split}
		\label{x15}
	\end{equation}
	Since $f\nabla f$ and $f\nabla g$ belong to $L_2((0,T)\times\Omega)$ for all $T>0$ by \eqref{x13}, a density argument ensures that the identity~\eqref{x15} is valid for any $\varphi\in H^1(\Omega)$. We next use the time continuity \eqref{x14} of $f$ and a classical approximation argument to show that $f$ solves \eqref{p2a}. A similar argument allows us to derive \eqref{p2b} from \eqref{x03b}.
	
	Finally, combining \eqref{x13}, \eqref{x14}, and a weak lower semicontinuity argument, we may let $m\to\infty$ in \eqref{x06}, \eqref{x07}, and \eqref{x08} with $\tau=\tau_m$ to show that $u=(f,g)$ satisfies \eqref{p3}, \eqref{p4a}, and \eqref{p5}, thereby completing the proof.
\end{proof}

\section*{Acknowledgments}

PhL gratefully acknowledges the hospitality and support of the Fakult\"at f\"ur Mathematik, Universit\"at Regensburg, where part of this work was done.

\appendix
\section{The polynomials $\Phi_n$, $n\ge 2$}\label{secA}

Let $n\ge 2$. According to the discussion in the introduction, we look for an homogeneous polynomial $\Phi_n$ of degree $n$ such that:
\begin{itemize}
	\item [(P1)] $\Phi_n$ is convex on $[0,\infty)^2$;
	\item [(P2)] the matrix $S_n(X) := D^2\Phi_n(X) M(X)$ is symmetric and positive semidefinite for~${X\in [0,\infty)^2}$.
\end{itemize}
We recall that the mobility matrix $M(X)$ is given by
\begin{equation*}
	M(X) = (m_{jk}(X))_{1\le j,k\le 2} := 
	\begin{pmatrix}
		a X_1 & b X_1 \\
		c X_2 & d X_2
	\end{pmatrix}\,, \qquad X\in\mathbb{R}^2\,,
\end{equation*}
see~\eqref{x1}. Specifically, we set
\begin{equation}
\Phi_n(X) := \sum_{j=0}^n a_{j,n} X_1^j X_2^{n-j}\,, \qquad X=(X_1,X_2)\in \mathbb{R}^2\,, \label{Polen}
\end{equation}
with  $a_{j,n}$, $0\leq j\leq n$, to be determined in order for properties~(P1)-(P2) to be satisfied. We recall that the parameters $(a,\, b,\, c,\, d)$ are assumed to satisfy~\eqref{condabcd}.

\begin{lemma}\label{lem.p1}
Set $a_{0,n}:=1$ and
\begin{equation}\label{ajn}
	a_{j,n}:= \prod_{k=0}^{j-1}\frac{(n-k)[ak+c(n-k-1)]}{(k+1)[bk+d(n-k-1)]} = \binom{n}{j} \prod_{k=0}^{j-1}\frac{ak+c(n-k-1)}{bk+d(n-k-1)}\,,\qquad 1\leq j\leq n\,.
\end{equation}
Then $a_{j,n}>0$ for $0\le j \le n$ and $S_n(X) = D^2\Phi_n(X) M(X)\in {\mathbf{Sym}_2(\mathbb{R})}$ for all $X\in\mathbb{R}^2$.
\end{lemma}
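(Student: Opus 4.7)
My plan is to treat the two assertions separately; the positivity is essentially free, so the substance is the symmetry of $S_n(X)$, which I would verify by matching coefficients of monomials on both sides.

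For the positivity of $a_{j,n}$, I would simply observe that under \eqref{condabcd} every factor in the product in \eqref{ajn} is a strictly positive rational expression in $a,b,c,d,n$ (for $0\le k\le j-1\le n-1$, both $ak+c(n-k-1)$ and $bk+d(n-k-1)$ are strictly positive), so the finite product is positive, giving $a_{j,n}>0$ for all $0\le j\le n$.

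For the symmetry, I would compute the three relevant second derivatives,
\begin{align*}
\partial_{11}\Phi_n(X) &= \sum_{j=2}^{n} j(j-1)\, a_{j,n}\, X_1^{j-2} X_2^{n-j}\,,\\
\partial_{22}\Phi_n(X) &= \sum_{j=0}^{n-2} (n-j)(n-j-1)\, a_{j,n}\, X_1^{j} X_2^{n-j-2}\,,\\
\partial_{12}\Phi_n(X) &= \sum_{j=1}^{n-1} j(n-j)\, a_{j,n}\, X_1^{j-1} X_2^{n-j-1}\,,
\end{align*}
and then form the $(1,2)$- and $(2,1)$-entries of $S_n(X)=D^2\Phi_n(X)M(X)$, namely $bX_1\partial_{11}\Phi_n+dX_2\partial_{12}\Phi_n$ and $aX_1\partial_{12}\Phi_n+cX_2\partial_{22}\Phi_n$ respectively. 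Both expressions are homogeneous polynomials of degree $n-1$ in $(X_1,X_2)$, and the coefficient of the monomial $X_1^{j-1}X_2^{n-j}$ (for $1\le j\le n$) in the first is $j\bigl[b(j-1)+d(n-j)\bigr]a_{j,n}$, while in the second it is $(n-j+1)\bigl[a(j-1)+c(n-j)\bigr]a_{j-1,n}$, with the natural convention that the contributions at $j=1$ and $j=n$ vanish where appropriate. Symmetry of $S_n(X)$ is therefore equivalent to the recurrence
\begin{equation*}
j\bigl[b(j-1)+d(n-j)\bigr]\,a_{j,n}=(n-j+1)\bigl[a(j-1)+c(n-j)\bigr]\,a_{j-1,n}\,, \qquad 1\le j\le n\,.
\end{equation*}

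Setting $k=j-1$, this rearranges to
\begin{equation*}
\frac{a_{k+1,n}}{a_{k,n}}=\frac{(n-k)\bigl[ak+c(n-k-1)\bigr]}{(k+1)\bigl[bk+d(n-k-1)\bigr]}\,, \qquad 0\le k\le n-1\,,
\end{equation*}
and a straightforward telescoping from $a_{0,n}=1$ recovers exactly the closed form in \eqref{ajn}; the denominators never vanish by \eqref{condabcd}. The main (minor) obstacle is simply to verify that the boundary cases $j=1$ and $j=n$, where one of the four contributions is structurally absent because of the summation ranges, are correctly handled — for $j=1$ the identity reduces to $d(n-1)a_{1,n}=cn(n-1)a_{0,n}$ and for $j=n$ to $bn(n-1)a_{n,n}=a(n-1)a_{n-1,n}$, both of which are consistent with the telescoping formula. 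Once the coefficients of every monomial match, the two entries coincide as polynomials, hence as functions on $\mathbb{R}^2$, and $S_n(X)\in\mathbf{Sym}_2(\mathbb{R})$ for all $X\in\mathbb{R}^2$.
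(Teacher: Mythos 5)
Your proposal is correct and follows essentially the same route as the paper: computing the second derivatives of $\Phi_n$, matching the coefficients of the off-diagonal entries of $D^2\Phi_n(X)M(X)$, and reducing symmetry to the recurrence $(k+1)[bk+d(n-k-1)]a_{k+1,n}=(n-k)[ak+c(n-k-1)]a_{k,n}$, which the closed form \eqref{ajn} satisfies by telescoping from $a_{0,n}=1$. The only (immaterial) difference is direction: the paper derives the closed form from the recurrence, while you verify that the given closed form satisfies it.
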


\begin{proof}
	Given $X\in\mathbb{R}^2$,  we compute 
	\begin{align*} 
		\partial_1^2 \Phi_n(X) & = \sum_{j=1}^{n-1} j(j+1) a_{j+1,n} X_1^{j-1} X_2^{n-j-1} = \sum_{j=0}^{n-2} (j+1)(j+2) a_{j+2,n} X_1^j X_2^{n-j-2} \,, \\
		\partial_1 \partial_2 \Phi_n(X) & = \sum_{j=1}^{n-1} j(n-j) a_{j,n} X_1^{j-1} X_2^{n-j-1} = \sum_{j=0}^{n-2} (j+1)(n-j-1) a_{j+1,n} X_1^j X_2^{n-j-2} \,, \\
		\partial_2^2 \Phi_n(X) &  = \sum_{j=0}^{n-2} (n-j)(n-j-1) a_{j,n} X_1^j X_2^{n-j-2} \,. 
	\end{align*}
	It then follows that
	\begin{align*}
		[S_n(X)]_{11} & = a X_1 \partial_1^2 \Phi_n(X) + c X_2 \partial_1 \partial_2 \Phi_n(X)\\ 
		& = a \sum_{j=1}^{n-1} j(j+1) a_{j+1,n} X_1^j X_2^{n-j-1} + c \sum_{j=0}^{n-2} (j+1)(n-j-1) a_{j+1,n} X_1^j X_2^{n-j-1}\,, \\
		[S_n(X)]_{12} & = bX_1 \partial_1^2\Phi_n(X) + d X_2 \partial_1 \partial_2 \Phi_n(X) \\
		& = b \sum_{j=1}^{n-1} j(j+1) a_{j+1,n} X_1^j X_2^{n-j-1} + d \sum_{j=0}^{n-2} (j+1)(n-j-1) a_{j+1,n} X_1^j X_2^{n-j-1}\,\\
		& = bn(n-1)a_{n,n}X_1^{n-1} +\sum_{j=1}^{n-2} (j+1)[bj+d(n-j-1)] a_{j+1,n} X_1^j X_2^{n-j-1} \\
		& \qquad +   d(n-1) a_{1,n} X_2^{n-1}\,,\\
		[S_n(X)]_{21} & = aX_1 \partial_1 \partial_2 \Phi_n(X) + c X_2 \partial_2^2 \Phi_n(X)\\
		& = a \sum_{j=1}^{n-1} j(n-j) a_{j,n} X_1^{j} X_2^{n-j-1}  +c \sum_{j=0}^{n-2} (n-j)(n-j-1) a_{j,n} X_1^j X_2^{n-j-1}\,\\ 
		& = a (n-1)a_{n-1,n}X_1^{n-1}+\sum_{j=1}^{n-2} (n-j)[aj+c(n-j-1)] a_{j,n} X_1^{j} X_2^{n-j-1} \\
		& \qquad  + c  n (n-1)a_{0,n}  X_2^{n-1}\,,\\ 
		[S_n(X)]_{22} & = b X_1 \partial_1 \partial_2 \Phi_n(X) + d X_2 \partial_2^2 \Phi_n(X)\\
		& = b \sum_{j=1}^{n-1} j(n-j) a_{j,n} X_1^{j} X_2^{n-j-1} + d \sum_{j=0}^{n-2} (n-j)(n-j-1) a_{j,n} X_1^j X_2^{n-j-1}\,.
	\end{align*}
	Hence, $S_n(X)$ is symmetric provided that
	\begin{equation*}
		(j+1)[bj+d(n-j-1)]a_{j+1,n}= (n-j)[aj+c(n-j-1)] a_{j,n},\quad 0\leq j\leq n-1\,,
	\end{equation*}
or, equivalently,
\begin{equation}\label{recursive}
	a_{j+1,n} = \frac{(n-j)[aj+c(n-j-1)]}{(j+1)[bj+d(n-j-1)]} a_{j,n}\,,\quad 0\leq j\leq n-1\,.
\end{equation}
Since $a_{0,n}=1$, the closed form formula~\eqref{ajn} readily follows from~\eqref{recursive} and we deduce from~\eqref{ajn} and the positivity of $(a,\,b,\,c,\, d)$ that $a_{j,n}>0$ for all $0\leq j\leq n$.
\end{proof}

We next show that ${D^2\Phi_n(X)}$ is positive definite for $X\in[0,\infty)^2\setminus\{(0,0)\}$. This property implies in particular that ${D^2\Phi_n(X)}$ is positive semidefinite for $X\in[0,\infty)^2$.

\begin{lemma}\label{lem.p2}
	Let $\Phi_n$ be the polynomial defined by~\eqref{Polen} and~\eqref{ajn}. Then $D^2\Phi_n(X)\in \mathbf{SPD}_2(\mathbb{R})$ for~${X\in [0,\infty)^2\setminus\{(0,0)\}}$.
\end{lemma}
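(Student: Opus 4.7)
The plan is to apply Sylvester's criterion to the $2\times 2$ symmetric matrix $D^2\Phi_n(X)$: it suffices to establish, for every $X\in[0,\infty)^2\setminus\{(0,0)\}$, the positivity of the $(1,1)$-entry $\partial_1^2\Phi_n(X)$ together with the positivity of the determinant $\det D^2\Phi_n(X)$. The formulas for the three second-order partial derivatives of $\Phi_n$ computed during the proof of \Cref{lem.p1} express each of them as a sum over $0\le j\le n-2$ of monomials $X_1^j X_2^{n-j-2}$ with positive coefficients proportional to some $a_{j,n}$. Since the extreme terms $j=0$ and $j=n-2$ are always present, both diagonal entries are strictly positive on $[0,\infty)^2\setminus\{(0,0)\}$, and only the positivity of the determinant requires genuine work.

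My plan is to reduce $\det D^2\Phi_n(X)>0$ to the single pointwise inequality
\begin{equation*}
(j+1)(n-j-1)\,a_{j+1,n}^2 \;<\; (j+2)(n-j)\,a_{j,n}\,a_{j+2,n}\,, \qquad 0\le j\le n-2.
\end{equation*}
Granting this, set
\begin{equation*}
c_j:=(j+1)(n-j-1)\,a_{j+1,n}\,X_1^j X_2^{n-j-2}
\end{equation*}
and choose the positive square roots
\begin{equation*}
u_j:=\sqrt{(j+1)(j+2)\,a_{j+2,n}}\,X_1^{j/2}X_2^{(n-j-2)/2}\,,\qquad v_j:=\sqrt{(n-j)(n-j-1)\,a_{j,n}}\,X_1^{j/2}X_2^{(n-j-2)/2}\,,
\end{equation*}
so that the displayed inequality is exactly $c_j<u_j v_j$ whenever $u_j,v_j>0$. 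For $X\in(0,\infty)^2$ all the $u_j,v_j$ are strictly positive; summing term by term and applying the Cauchy--Schwarz inequality gives
\begin{equation*}
(\partial_1\partial_2\Phi_n(X))^2 = \Big(\sum_{j=0}^{n-2} c_j\Big)^2 < \Big(\sum_{j=0}^{n-2} u_j v_j\Big)^2 \le \Big(\sum_{j=0}^{n-2} u_j^2\Big)\Big(\sum_{j=0}^{n-2} v_j^2\Big) = \partial_1^2\Phi_n(X)\,\partial_2^2\Phi_n(X)\,,
\end{equation*}
which proves $\det D^2\Phi_n(X)>0$ on $(0,\infty)^2$. The two remaining boundary cases $X_1=0,\,X_2>0$ and $X_1>0,\,X_2=0$ leave only the terms $j=0$ and $j=n-2$, respectively, in each of the three sums, and applying the pointwise inequality at those indices gives $\det D^2\Phi_n(X)>0$ there as well.

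The crux of the argument, and the step I expect to be the main obstacle, is thus establishing the pointwise inequality. My plan is to use the recursive identity
\begin{equation*}
a_{j+1,n}=\frac{(n-j)P_j}{(j+1)Q_j}\,a_{j,n}\,,\qquad P_j:=aj+c(n-j-1),\quad Q_j:=bj+d(n-j-1)\,,
\end{equation*}
extracted from the proof of \Cref{lem.p1}. A direct substitution of this recursion into the ratio $a_{j+1,n}^2/[a_{j,n}\,a_{j+2,n}]$ reduces the pointwise inequality to the cross-ratio estimate $P_j Q_{j+1}<P_{j+1}Q_j$; expanding the products then yields the clean identity
\begin{equation*}
P_{j+1}Q_j-P_j Q_{j+1}=(ad-bc)(n-1)\,,
\end{equation*}
which is strictly positive thanks to the structural hypothesis $ad>bc$ from \eqref{condabcd} and to $n\ge 2$. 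This identity is exactly the place where the non-degeneracy condition $ad>bc$ enters the convexity analysis of $\Phi_n$, and it is the only genuinely non-routine algebraic step in the argument.
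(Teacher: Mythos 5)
Your proof is correct, and it takes a genuinely different route from the paper's. The paper expands $\det(D^2\Phi_n(X))$ as the full double sum \eqref{lf07}, computes the coefficients $A_{j,k}$ via the recursion \eqref{recursive}, and then exploits the antisymmetry $A_{k-1,j+1}=-A_{j,k}$ to symmetrize the sum, showing that all paired terms carry the non-negative factor $(ad-bc)(j+1-k)^2$ and retaining the two corner terms to obtain the explicit lower bound \eqref{lf10}. You instead prove the single-index ``log-concavity'' inequality $(j+1)(n-j-1)a_{j+1,n}^2<(j+2)(n-j)a_{j,n}a_{j+2,n}$, which by the recursion \eqref{recursive} reduces to $P_{j+1}Q_j-P_jQ_{j+1}=(ad-bc)(n-1)>0$ (your algebra here checks out, and it is the same place where \eqref{condabcd} enters in the paper, via the sign of $A_{j,k}$ and, in disguise, the monotonicity of the function $\chi$ used in \Cref{lelfb}), and you then conclude by a term-by-term comparison and the Cauchy--Schwarz inequality, with the two boundary half-axes handled by the single surviving index. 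Your Sylvester-type criterion (positive $(1,1)$-entry plus positive determinant) is equivalent to the paper's trace/determinant argument for $2\times2$ symmetric matrices, and your treatment of the strictness and of the cases $X_1=0$ or $X_2=0$ is complete. What your approach buys is a shorter and more conceptual positivity proof that avoids the double-sum bookkeeping; what the paper's computation buys is the quantitative lower bound \eqref{lf10} on $\det(D^2\Phi_n)$, which it then reuses verbatim in the proof of \Cref{lem.p3} (though your strict positivity of the determinant on $(0,\infty)^2$ would equally suffice there, since $\det S_n=(ad-bc)X_1X_2\det D^2\Phi_n$).
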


\begin{proof}
Given $X\in [0,\infty)^2$, it follows from the positivity of the coefficients $a_{j,n}$, $0\leq j\leq n$, of $\Phi_n$ that
\begin{equation*}
	\mathrm{tr}(D^2\Phi_n(X)) := \partial_1^2 \Phi_n(X) + \partial_2^2 \Phi_n(X) \ge 0\,, \qquad X\in [0,\infty)^2\,.  
\end{equation*}
It remains to show that the determinant $\det(D^2\Phi_n(X))$ is also non-negative.
To this end we compute
\begin{align}
	\det(D^2\Phi_n(X)) & =  \partial_1^2 \Phi_n(X) \partial_2^2 \Phi_n(X) - [\partial_1 \partial_2 \Phi_n(X)]^2 \nonumber \\
	& = \sum_{j=0}^{n-2} \sum_{k=0}^{n-2} (j+1)(n-k-1) A_{j,k} X_1^{j+k} X_2^{2n-j-k-4}\,, \label{lf07}
\end{align}
where
\begin{equation*}
	A_{j,k} := (j+2)(n-k) a_{j+2,n} a_{k,n} - (n-j-1)(k+1) a_{j+1,n} a_{k+1,n}\,, \qquad 0 \le j,k \le n-2\,.
\end{equation*}
Using \eqref{recursive}, we express $a_{j+2,n}$ and $a_{k+1,n}$ in terms of $a_{j+1,n}$ and $a_{k,n}$, respectively, to arrive at the following formula
\begin{align}
A_{j,k}&= (n-k)(n-j-1) \Big[ \frac{a(j+1)+c(n-j-2)}{b(j+1)+d(n-j-2)}  - \frac{ak+c(n-k-1)}{bk+d(n-k-1)}\Big]  a_{j+1,n} a_{k,n} \nonumber\\[1ex]
&= (ad-bc)(n-k)(n-j-1) \frac{(j+1)(n-k-1)-k(n-j-2)}{[b(j+1)+d(n-j-2)] [bk+d(n-k-1)]}  a_{j+1,n} a_{k,n}\nonumber \\[1ex]
&= (ad-bc)\frac{(n-1)(n-k)(n-j-1)(j+1-k)}{\alpha_{j+1,n}\alpha_{k,n}}   a_{j+1,n} a_{k,n}\,, \label{Ajk}
\end{align}
where $\alpha_{k,n}$ denotes the positive number
\begin{equation*}
\alpha_{k,n}:=bk+d(n-k-1)\,,\quad 0\leq k\leq n-1\,.
\end{equation*}
 In particular, 
\begin{equation}
	A_{k-1,j+1} = - A_{j,k}\,, \qquad 0\le j\le n-3\,, \ 1\le k\le n-2\,. \label{lf09}
\end{equation} 
It then follows from \eqref{lf07}   that
\begin{align*}
	2 \det(D^2\Phi_n(X))	& = \sum_{j=0}^{n-2} \sum_{k=0}^{n-2} (j+1)(n-k-1) A_{j,k} X_1^{j+k} X_2^{2n-j-k-4} \\
	& \qquad + \sum_{l=1}^{n-1} \sum_{i=-1}^{n-3} l(n-i-2) A_{l-1,i+1} X_1^{i+l} X_2^{2n-i-l-4} \\
	& = \sum_{j=0}^{n-2} \sum_{k=0}^{n-2} (j+1)(n-k-1) A_{j,k} X_1^{j+k} X_2^{2n-j-k-4} \\
	& \qquad + \sum_{j=-1}^{n-3} \sum_{k=1}^{n-1} k(n-j-2) A_{k-1,j+1} X_1^{j+k} X_2^{2n-j-k-4}\\
	& = \sum_{j=0}^{n-3} \sum_{k=1}^{n-2} (j+1)(n-k-1) A_{j,k} X_1^{j+k} X_2^{2n-j-k-4} \\
	& \qquad + \sum_{k=0}^{n-2} (n-1)(n-k-1) A_{n-2,k} X_1^{n-2+k} X_2^{n-k-2} \\
	& \qquad + \sum_{j=0}^{n-3} (j+1)(n-1) A_{j,0} X_1^{j} X_2^{2n-j-4} \\	
	& \qquad  + \sum_{j=0}^{n-3} \sum_{k=1}^{n-2} k(n-j-2) A_{k-1,j+1} X_1^{j+k} X_2^{2n-j-k-4} \\
	& \qquad + \sum_{k=1}^{n-1}  k(n-1) A_{k-1,0} X_1^{k-1} X_2^{2n-k-3} \\
	& \qquad + \sum_{j=0}^{n-3} (n-1)(n-j-2) A_{n-2,j+1} X_1^{j+n-1} X_2^{n-j-3} \,.
\end{align*}
According to~\eqref{condabcd} and~\eqref{Ajk},
\begin{align*}
	A_{l,0} & = (ad-bc) \frac{n(n-1)(n-1-l)(l+1)}{\alpha_{0,n}\alpha_{l+1,n}}>0\,, \qquad 0\le l\le n-2\,,\\
	A_{n-2,l} & = (ad-bc) \frac{(n-1)(n-l)(n-1-l)}{\alpha_{n-1,n}\alpha_{l,n}}>0\,, \qquad 0\le l\le n-2\,.
\end{align*}
In particular, all the terms in the above identity involving a single sum are non-negative. Therefore, using the symmetry property~\eqref{lf09} and retaining in the last two sums only the terms corresponding to $k=1$ and $j=n-3$, respectively, we get
\begin{align*}
	2 \det(D^2\Phi_n(X))	& \ge	 \sum_{j=0}^{n-3} \sum_{k=1}^{n-2} \left[ (j+1)(n-k-1) - k(n-j-2) \right] A_{j,k} X_1^{j+k} X_2^{2n-j-k-4} \\
	& \qquad + (n-1) A_{n-2,n-2} X_1^{2n-4} + (n-1) A_{0,0} X_2^{2n-4} \\
	& = \sum_{j=0}^{n-3} \sum_{k=1}^{n-2} (n-1)(j+1-k) A_{j,k} X_1^{j+k} X_2^{2n-j-k-4} \\
	& \qquad + (n-1) A_{n-2,n-2} X_1^{2n-4} + (n-1) A_{0,0} X_2^{2n-4}\,.
\end{align*}
Observing that
\begin{equation*}
	(n-1)(j+1-k) A_{j,k} = (ad-bc) \frac{(n-1)^2(n-k)(n-j-1)(j+1-k)^2}{\alpha_{j+1,n}\alpha_{k,n}} a_{j+1,n} a_{k,n}  \ge 0
\end{equation*}
for $0\le j, k\le n-2$, we conclude that
\begin{equation}
	2\det(D^2\Phi_n(X)) \ge (n-1) A_{n-2,n-2} X_1^{2n-4} + (n-1) A_{0,0} X_2^{2n-4}\,, \qquad X\in [0,\infty)^2\,. \label{lf10}
\end{equation} 
Since $A_{0,0}>0$ and $A_{n-2,n-2}>0$, we have thus established that, for each $X\in [0,\infty)^2\setminus\{(0,0)\}$, the symmetric matrix $D^2\Phi_n(X)$ has non-negative trace and positive determinant, so that it is positive definite. 
\end{proof}

We next turn to the positive definiteness of $S_n = D^2\Phi_n M$.

\begin{lemma}\label{lem.p3}
	Let $\Phi_n$ be defined by~\eqref{Polen} and~\eqref{ajn}. Then $S_n(X)=D^2\Phi_n(X)M(X)\in \mathbf{SPD}_2(\mathbb{R})$ for $X\in (0,\infty)^2$.
\end{lemma}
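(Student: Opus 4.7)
The plan is to prove positive definiteness of the $2\times 2$ symmetric matrix $S_n(X)$ (symmetry being already guaranteed by Lemma~\ref{lem.p1}) by exploiting the elementary criterion that a $2\times 2$ symmetric matrix is positive definite if and only if both its trace and its determinant are strictly positive. This reduces the proof to verifying two scalar inequalities on $(0,\infty)^2$.

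For the determinant, I would simply use the multiplicativity of $\det$:
\begin{equation*}
\det(S_n(X)) = \det(D^2\Phi_n(X))\cdot \det(M(X)).
\end{equation*}
The second factor equals $(ad-bc)X_1 X_2$, which is strictly positive on $(0,\infty)^2$ thanks to assumption~\eqref{condabcd}. The first factor is strictly positive by Lemma~\ref{lem.p2}, since every point of $(0,\infty)^2$ belongs to $[0,\infty)^2\setminus\{(0,0)\}$. Hence $\det(S_n(X))>0$ for every $X\in(0,\infty)^2$.

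For the trace, I would read off the diagonal entries $[S_n(X)]_{11}$ and $[S_n(X)]_{22}$ from the explicit formulas computed in the proof of Lemma~\ref{lem.p1}. Both are sums of monomials $a_{j,n}X_1^j X_2^{n-j-1}$ (or with $a_{j+1,n}$) weighted by positive combinatorial factors times the positive parameters $a,b,c,d$. Since $a_{j,n}>0$ for all $0\le j\le n$ by Lemma~\ref{lem.p1} and $X_1,X_2>0$, every monomial appearing is strictly positive; in particular, for $X\in(0,\infty)^2$, the term $c(n-1)a_{1,n}X_2^{n-1}$ in $[S_n(X)]_{11}$ and the term $bn(n-1)a_{n,n}X_1^{n-1}$ (contained in the expression for $[S_n(X)]_{22}$ after reindexing) already force $\mathrm{tr}(S_n(X))>0$.

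Combining the strict positivity of both the trace and the determinant with the symmetry established in Lemma~\ref{lem.p1}, one obtains $S_n(X)\in\mathbf{SPD}_2(\mathbb{R})$ for every $X\in(0,\infty)^2$. There is no real obstacle here: the whole argument rests entirely on Lemmas~\ref{lem.p1} and~\ref{lem.p2}, which have already done the nontrivial work (the determinant computation for $D^2\Phi_n$ and the choice of the coefficients $a_{j,n}$ ensuring symmetry). The present lemma is essentially their corollary once the condition $ad>bc$ is used to handle $\det(M(X))$.
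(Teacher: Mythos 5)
Your proposal is correct and takes essentially the same route as the paper: symmetry from Lemma~\ref{lem.p1}, $\det(S_n(X))=(ad-bc)X_1X_2\det(D^2\Phi_n(X))>0$ via \eqref{condabcd} and Lemma~\ref{lem.p2}, and positivity of the trace from the positivity of the coefficients $a_{j,n}$ on $(0,\infty)^2$. (Only a harmless slip: the term $bn(n-1)a_{n,n}X_1^{n-1}$ belongs to $[S_n(X)]_{12}$, the corresponding monomial in $[S_n(X)]_{22}$ being $b(n-1)a_{n-1,n}X_1^{n-1}$, which does not affect the argument since every diagonal monomial is strictly positive there.)
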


\begin{proof}
Let $X\in (0,\infty)^2$. On the one hand, by~\eqref{condabcd}, \eqref{lf10}, and the positivity of $A_{0,0}$ and $A_{n-2,n-2}$,
\begin{align*}
	2\det(S_n(X)) & = 2(ad-bc)X_1 X_2 \det(D^2\Phi_n(X)) \\
	& \geq(ad-bc)X_1X_2(n-1) \Big[ A_{n-2,n-2} X_1^{2n-4} + (n-1) A_{0,0} X_2^{2n-4} \Big]>0\,.
\end{align*}
On the other hand, the positivity of $a_{j,n}$ for $0\le j \le n$ and \eqref{condabcd} imply that
\begin{equation*}
	\mathrm{tr}(S_n(X)) = [S_n(X)]_{11} + [S_n(X)]_{22}> 0\,.
\end{equation*}
Consequently, $S_n(X)$ has positive trace and positive determinant, and is thus positive definite as claimed.
\end{proof}

We end up this section with useful upper and lower bounds for $\Phi_n$.

\begin{lemma}\label{lelfb}
	Let $\Phi_n$ be defined by~\eqref{Polen} and~\eqref{ajn}. Then
	\begin{equation}\label{LUB}
		\frac{( c X_1 + d X_2)^n}{d^n} \le \Phi_n(X) \le \frac{( a X_1 + b X_2)^n}{b^n}\,, \qquad X\in [0,\infty)^2\,.
	\end{equation}
\end{lemma}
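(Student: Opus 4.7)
The plan is to expand the two bounding polynomials by the binomial theorem and then compare the resulting expressions with the formula~\eqref{ajn} for the coefficients $a_{j,n}$ term-by-term in $X_1^j X_2^{n-j}$.

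First I would observe that
\begin{equation*}
\frac{(aX_1+bX_2)^n}{b^n} = \sum_{j=0}^n \binom{n}{j} \left(\frac{a}{b}\right)^{\!j} X_1^j X_2^{n-j}\,, \qquad \frac{(cX_1+dX_2)^n}{d^n} = \sum_{j=0}^n \binom{n}{j} \left(\frac{c}{d}\right)^{\!j} X_1^j X_2^{n-j}\,,
\end{equation*}
so that, since $X\in [0,\infty)^2$ and all coefficients involved are non-negative, it suffices to establish the coefficient-wise bounds
\begin{equation*}
	\binom{n}{j}\left(\frac{c}{d}\right)^{\!j} \le a_{j,n} \le \binom{n}{j}\left(\frac{a}{b}\right)^{\!j}\,, \qquad 0\le j\le n\,.
\end{equation*}
Using~\eqref{ajn}, this reduces to proving the inequalities
\begin{equation*}
	\left(\frac{c}{d}\right)^{\!j} \le \prod_{k=0}^{j-1}\frac{ak+c(n-k-1)}{bk+d(n-k-1)} \le \left(\frac{a}{b}\right)^{\!j}\,, \qquad 1\le j\le n\,.
\end{equation*}

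The heart of the argument is then to verify the corresponding factor-wise bounds
\begin{equation*}
	\frac{c}{d} \le \frac{ak+c(n-k-1)}{bk+d(n-k-1)} \le \frac{a}{b}\,, \qquad 0\le k\le n-1\,,
\end{equation*}
after which taking the product over $k=0,\dots,j-1$ yields the desired estimate. To check the right-hand inequality, I would cross-multiply to reduce it to $bc(n-k-1)\le ad(n-k-1)$, which holds since $n-k-1\ge 0$ and $ad>bc$ by~\eqref{condabcd}. The left-hand inequality similarly reduces to $bck\le adk$, which holds since $k\ge 0$ and $ad>bc$.

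I do not anticipate any real obstacle: the only structural ingredient is the positivity $ad-bc>0$ from~\eqref{condabcd}, which drives both one-step inequalities in the same direction, and the non-negativity of $X_1,X_2$ that allows the monotone comparison of homogeneous polynomials coefficient by coefficient. The slightly delicate point is simply to notice that the two indices $k$ and $n-k-1$ appearing in each factor have the right signs on the range $0\le k\le n-1$, so that both reductions really follow from the single assumption $ad>bc$.
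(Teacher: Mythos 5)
Your proof is correct and follows essentially the same route as the paper: both reduce \eqref{LUB} to the coefficient-wise bounds $\binom{n}{j}(c/d)^j \le a_{j,n} \le \binom{n}{j}(a/b)^j$ and then to the factor-wise estimate $c/d \le [ak+c(n-k-1)]/[bk+d(n-k-1)] \le a/b$, which in both cases rests only on $ad>bc$. The only cosmetic difference is that the paper packages the factor bound via the monotone function $\chi(z)=((a-c)z+c)/((b-d)z+d)$ evaluated at $z=k/(n-1)$, whereas you verify it by direct cross-multiplication.
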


\begin{proof}
Since the function
\begin{equation*}
	\chi (z) := \frac{ (a-c) z+c}{(b-d) z+d}\,, \qquad z\in [0,1]\,,
\end{equation*}
is increasing and positive, we deduce from \eqref{ajn} that, for $1\le j \le n$, 
\begin{equation*}
	a_{j,n}=  \binom{n}{j} \prod_{k=0}^{j-1}\chi\left( \frac{k}{n-1} \right)\leq \binom{n}{j} [\chi(1)]^j=\binom{n}{j}  \left( \frac{a}{b} \right)^j
\end{equation*} 
and
\begin{equation*}
	a_{j,n}=  \binom{n}{j} \prod_{k=0}^{j-1}\chi\left( \frac{k}{n-1} \right)\geq \binom{n}{j} [\chi(0)]^j=\binom{n}{j}  \left( \frac{c}{d} \right)^j\,.
\end{equation*} 
The  upper and lower bounds in \eqref{LUB} are direct consequences of the above inequalities.
\end{proof}

\bibliographystyle{siam}
\bibliography{GTFMP}
\end{document}